\newcommand\showlabel{\addtocounter{equation}{1}\tag{\theequation}}
\theoremstyle{plain}
\newtheorem{theorem}{Theorem}[section]
\newtheorem*{theorem*}{Theorem}
\newtheorem*{"theorem"}{``Theorem''}
\newtheorem{corollary}[theorem]{Corollary}
\newtheorem{lemma}[theorem]{Lemma}
\theoremstyle{definition}
\newtheorem{definition}[theorem]{Definition}
\theoremstyle{remark}
\newtheorem{remark}[theorem]{Remark}
\newtheorem{example}[theorem]{Example}
\numberwithin{equation}{section}
\newcommand{\N}{\mathbb N} 
\newcommand{\Z}{\mathbb Z} 
\newcommand{\R}{\mathbb R} 
\newcommand{\dist}{{\rm dist}}
\newcommand{\diam}{{\rm diam}}
\newcommand{\spt}{{\mathrm{spt}}}
\newcommand{\id}{\mathrm{id}}
\renewcommand{\H}{{\mathcal H}}
\newcommand{\E}{{\mathbb E}}
\newcommand{\W}{{\mathcal W}}
\newcommand{\M}{{\mathcal M}}
\renewcommand{\L}{{\mathcal L}}
\newcommand{\F}{{\mathcal F}}
\newcommand{\G}{{\mathcal G}}
\newcommand{\B}{{\mathcal B}}
\newcommand{\LRa} {\Leftrightarrow}
\newcommand{\Ra} {\Rightarrow}
\newcommand{\embeds}{\xhookrightarrow{\quad}}
\renewcommand{\d}{\mathrm{d}}
\newcommand{\dx}{\,\mathrm{d}x}
\newcommand{\dy}{\,\mathrm{d}y}
\newcommand{\dt}{\,\mathrm{d}t}
\let \ol = \overline
\newcommand{\eps}{\varepsilon}
\newcommand{\average}{{\mathchoice {\kern1ex\vcenter{\hrule height.4pt
width 6pt depth0pt} \kern-9.7pt} {\kern1ex\vcenter{\hrule
height.4pt width 4.3pt depth0pt} \kern-7pt} {} {} }}
\DeclareMathOperator{\argmin}{argmin}
\renewcommand{\P}{\mathbb{P}}
\newcommand{\cor}[1]{\textcolor{black}{#1}}
\begin{document}

\title[Kolmogorov Width: Shallow Networks, Random Features, NTK]{Kolmogorov Width Decay and Poor Approximators in Machine Learning: Shallow Neural Networks, Random Feature Models and Neural Tangent Kernels}

\author{Weinan E}
\address{Weinan E\\
Department of Mathematics and Program in Applied and Computational Mathematics\\ Princeton University\\ Princeton, NJ 08544\\ USA}
\email{weinan@math.princeton.edu}

\author{Stephan Wojtowytsch}
\address{Stephan Wojtowytsch\\
Program in Applied and Computational Mathematics\\
Princeton University\\
Princeton, NJ 08544
}
\email{stephanw@princeton.edu}

\date{\today}

\dedicatory{Dedicated to Andrew Majda on the occasion of his 70th birthday}

\subjclass[2020]{
68T07, 
41A30, 
41A65, 
46E15, 
46E22  
}
\keywords{Curse of dimensionality, two-layer network, \cor{multi-layer network}, population risk, Barron space, reproducing kernel Hilbert space, random feature model, neural tangent kernel, Kolmogorov width, approximation theory}

\begin{abstract}
We establish a scale separation of Kolmogorov width type between subspaces of a given Banach space under the condition that a sequence of linear maps converges much faster on one of the subspaces. The general technique is then applied to show that reproducing kernel Hilbert spaces are poor $L^2$-approximators for the class of two-layer neural networks in high dimension, and that \cor{multi-layer} networks with small path norm are poor approximators for certain Lipschitz functions, also in the $L^2$-topology.
\end{abstract}

\maketitle


\section{Introduction}

It has been known since the early 1990s that two-layer neural networks with sigmoidal or ReLU activation can approximate arbitrary continuous functions on compact sets in the uniform topology \cite{cybenko1989approximation,hornik1991approximation}. In fact, when approximating a suitable (infinite-dimensional) class of functions in the $L^2$ topology of {\em any} compactly supported Radon probability measure, two-layer networks can evade the curse of dimensionality \cite{barron1993universal}. In this article, we show that

\begin{enumerate}
\item infinitely wide random feature functions with norm bounds are much worse approximators in high dimension compared to two-layer neural networks.
\item infinitely wide neural networks are subject to the curse of dimensionality when approximating general Lipschitz functions in high dimension.
\end{enumerate}

In both cases, we consider approximation in the $L^2([0,1]^d)$-topology. \cor{The second statement applies more generally to any model in which few data samples are needed to estimate integrals uniformly over the hypothesis class.} In the first point, we can consider more general kernel methods instead of random features (including certain neural tangent kernels), and the second claim \cor{holds true for multi-layer networks as well as deep ResNets of bounded width}. We conjecture that Lipschitz functions in the second statement could be replaced with $C^k$ functions for fixed $k$. Precise statements of the results are given in Corollary \ref{corollary l2 slow} and Example \ref{example rkhs slow}.

To prove these results, we show more generally that if $X, Y$ are subspaces of a Banach space $Z$ and a sequence of linear maps $A_n$ converges quickly to a limit $A$ on $X$, but not on $Y$, then there must be a Kolmogorov width-type separation between $X$ and $Y$. The classical notion of Kolmogorov width is considered in Lemma \ref{lemma slow sequence} and later extended to a stronger notion of separation in Lemma \ref{lemma slow point}.
 
We apply the abstract result to the pairs $X=$ Barron space (for two-layer networks) \cor{or $X=$ tree-like function space (for multi-layer networks)}/$Y=$ Lipschitz space, and $X=$ RKHS/$Y=$ Barron space. In the first case, the sequence of linear maps is given by a type of Monte-Carlo integration, in the second case by projection onto the eigenspaces of the RKHS kernel.
 
This article is structured as follows. In Section \ref{section abstract}, we prove the abstract result which we apply to \cor{Barron/tree-like function spaces} and Lipschitz space in Section \ref{section concrete} and to RKHS and Barron space in Section \ref{section rkhs}. We conclude by discussing our results and some open questions in Section \ref{section discussion}. In appendices \ref{appendix barron} and \ref{appendix rkhs}, we review the natural function spaces for shallow neural networks and kernel methods respectively. In Appendix \ref{appendix rkhs}, we specifically focus on kernels arising from random feature models and neural tangent kernels for two-layer neural networks.

\subsection{Notation}

We denote the closed ball of radius $r>0$ around the origin in a Banach space by $B^X_r$ and the unit ball by $B^X_1 = B^X$. The space of continuous linear maps between Banach spaces $X,Y$ is denoted by $L(X,Y)$ and the continuous dual space of $X$ by $X^* = L(X,\R)$. \cor{The support of a Radon measure $\mu$ is denoted by $\spt\,\mu$.}

\section{An Abstract Lemma}\label{section abstract}

\subsection{Kolmogorov Width Version}

The Kolmogorov width of a function class $\F$ in another function class $\G$ with respect to a metric $d$ on the union of both classes is defined as the biggest distance of an element in $\G$ from the class $\F$:
\[
w_d(\F;\G) = \sup_{g\in \G} \dist(g, \F) = \sup_{g\in \G}\inf_{f\in \F} d(f,g).
\]
In this article, we consider the case where $\G$ is the unit ball in a Banach space $Y$, $\F$ is the ball of radius $t>0$ in a Banach space $X$ and $d= d_Z$ is induced by the norm on a Banach space $Z$ into which both $X$ and $Y$ embed densely. As $t$ increases, points in $Y$ are approximated to higher degrees of accuracy by elements of $X$. The rate of decay
\[
\rho(t) := w_{\cor{d_Z}}(B^X_t, B^Y_1) 
\]
provides a quantitative measure of density of $X$ in $Y$ with respect to the topology of $Z$. For a different point of view on width focusing on approximation by finite-dimensional spaces, see \cite[Chapter 9]{lorentz1966approximation}.

In the following Lemma, we show that if there exists a sequence of linear operators on $Z$ which behaves sufficiently differently on $X$ and $Y$, then $\rho$ must decay slowly as $t\to\infty$.

\begin{lemma}\label{lemma slow sequence}
Let $X, Y, Z, W$ be Banach spaces such that $X, Y\embeds Z$. Assume that $A_n, A : Z\to W$ are continuous linear operators such that
\[
 \|A_n - A\|_{L(X,W)} \leq C_X\, n^{-\alpha}, \qquad \|A_n-A\|_{L(Y,W)} \geq c_Y\, n^{-\beta}, \qquad \|A_n - A\|_{L(Z,W)}\leq C_Z
\]
for $\beta<\alpha$ and constants $C_X, c_Y, C_Z>0$. Then
\begin{equation}
\rho(t) \geq 2^{-\beta} \frac{\left( c_Y/2\right)^\frac{\alpha}{\alpha-\beta}}{C_Z\,C_X^\frac{\beta}{\alpha-\beta}}\,t^{-\frac\beta{\alpha-\beta}}\quad\forall\ t\geq \frac{c_Y}{2C_X}
\end{equation}
and
\begin{equation}
\liminf_{t\to\infty} \left(t^\frac\beta{\alpha-\beta}\: \rho(t) \right) \geq \frac{\left( c_Y/2\right)^\frac{\alpha}{\alpha-\beta}}{C_Z\,C_X^\frac{\beta}{\alpha-\beta}}.
\end{equation}
\end{lemma}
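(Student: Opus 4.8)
The plan is to test the difference of a point $y\in B^Y_1$ and a point $x\in B^X_t$ against the operators $A_n-A$, exploiting the gap between the fast decay on $X$ and the slow decay on $Y$. Fix $t\ge c_Y/(2C_X)$. For each $n\in\N$ and each $\eps>0$, the lower bound on $\|A_n-A\|_{L(Y,W)}$ provides some $y_n\in B^Y_1$, independent of $x$, with $\|(A_n-A)y_n\|_W\ge c_Y n^{-\beta}-\eps$. For any $x\in B^X_t$ the triangle inequality in $W$ gives $\|(A_n-A)(y_n-x)\|_W\ge\|(A_n-A)y_n\|_W-\|(A_n-A)x\|_W\ge c_Y n^{-\beta}-\eps-C_X\,t\,n^{-\alpha}$, using $\|A_n-A\|_{L(X,W)}\le C_X n^{-\alpha}$ together with $\|x\|_X\le t$; on the other hand $\|(A_n-A)(y_n-x)\|_W\le C_Z\,\|y_n-x\|_Z$ by the bound on $\|A_n-A\|_{L(Z,W)}$. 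Combining these, taking the infimum over $x\in B^X_t$ (legitimate since $y_n$ does not depend on $x$) and then letting $\eps\to0$ yields
\[
\rho(t)\;\ge\;\frac1{C_Z}\bigl(c_Y\,n^{-\beta}-C_X\,t\,n^{-\alpha}\bigr)\qquad\text{for every }n\in\N .
\]

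It then remains to choose $n$ well. I would introduce the real balance point $\tau:=\bigl(2C_X t/c_Y\bigr)^{1/(\alpha-\beta)}$; the hypothesis $t\ge c_Y/(2C_X)$ is precisely $\tau\ge1$, so setting $n:=\lceil\tau\rceil$ gives $\tau\le n\le\tau+1\le2\tau$. From $n^{\alpha-\beta}\ge\tau^{\alpha-\beta}=2C_X t/c_Y$ we obtain $C_X\,t\,n^{-\alpha}\le\tfrac12 c_Y\,n^{-\beta}$, hence
\[
c_Y\,n^{-\beta}-C_X\,t\,n^{-\alpha}\;\ge\;\tfrac12\,c_Y\,n^{-\beta}\;\ge\;\tfrac12\,c_Y\,(2\tau)^{-\beta}\;=\;2^{-\beta}\,\tfrac{c_Y}{2}\,\tau^{-\beta}.
\]
Substituting $\tau^{-\beta}=\bigl(c_Y/(2C_X)\bigr)^{\beta/(\alpha-\beta)}\,t^{-\beta/(\alpha-\beta)}$ and using $1+\tfrac{\beta}{\alpha-\beta}=\tfrac{\alpha}{\alpha-\beta}$ to collect $\tfrac{c_Y}{2}\bigl(c_Y/2\bigr)^{\beta/(\alpha-\beta)}=\bigl(c_Y/2\bigr)^{\alpha/(\alpha-\beta)}$ turns the above into the first inequality of the statement.

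For the $\liminf$ I would keep $n=\lceil\tau\rceil$ but avoid the crude estimate $n\le2\tau$: since $\tau\to\infty$ and $n/\tau\to1$ as $t\to\infty$, one has $n^{-\beta}=\tau^{-\beta}(n/\tau)^{-\beta}=(1+o(1))\,\tau^{-\beta}$, so the inequality $c_Y n^{-\beta}-C_X t n^{-\alpha}\ge\tfrac12 c_Y n^{-\beta}$ already yields $c_Y n^{-\beta}-C_X t n^{-\alpha}\ge(1+o(1))\bigl(c_Y/2\bigr)^{\alpha/(\alpha-\beta)}C_X^{-\beta/(\alpha-\beta)}\,t^{-\beta/(\alpha-\beta)}$; multiplying by $t^{\beta/(\alpha-\beta)}/C_Z$ and passing to the $\liminf$ gives the second claim. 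I do not anticipate a genuine obstacle: the one delicate point is the integrality of $n$, which forces the detour through $\tau$ and is responsible for the extra factor $2^{-\beta}$ in the non-asymptotic bound (it disappears asymptotically because $n/\tau\to1$); one can also note along the way that only the continuous embeddings $X,Y\embeds Z$ are used, not density.
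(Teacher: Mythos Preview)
Your argument is correct and follows essentially the same route as the paper: test $y_n-x$ against $A_n-A$, use the three operator bounds, and balance $n$ against $t$ so that $C_X\,t\,n^{-\alpha}\le\tfrac12 c_Y\,n^{-\beta}$. The only cosmetic difference is that the paper parametrizes by $n$ (setting $t_n=\frac{c_Y}{2C_X}n^{\alpha-\beta}$ and interpolating via $\rho(t)\ge\rho(t_n)$ with the ratio $((n-1)/n)^\beta$), whereas you parametrize by $t$ and round $\tau$ to $n=\lceil\tau\rceil$; your explicit $\eps$-argument for the possible non-attainment of the supremum is what the paper defers to a remark.
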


\begin{proof}
Choose a sequence $y_n\in  B^Y$ such $\|(A_n-A)y_n\|_{W} \geq c_Y\,n^{-\beta}$ and $x_n\in X$ such that
\[
x_n \in \argmin_{\{x : \|x\|_X\leq t_n\}} \|x - y_n\|_Z\qquad\text{for }\quad t_n:= \frac{c_Y}{2\,C_X}\,n^{\alpha-\beta}
\]
(see Remark \ref{remark minimizer}). Then
\begin{align*}
c_Y\,n^{-\beta} &\leq \|(A_n-A)y_n\|_W\\
	&\leq \|(A_n-A)(y_n-x_n)\|_W+ \|(A_n-A)x_n\|_W\\
	&\leq C_Z\,\|y_n-x_n\|_Z  + C_X\,n^{-\alpha}\|x_n\|_X\\
	&\leq C_Z\,\|x_n - y_n\|_Z + \frac{c_Y}2\,{n^{-\beta}}.
\end{align*}
We therefore have
\[
\|x_n-y_n\|_Z \geq \frac{c_Y}{2\,C_Z}\,n^{-\beta}  = \frac{c_Y}{2\,C_Z} \left(\frac{2\,C_X}{c_Y} \, t_n\right)^\frac{-\beta}{\alpha-\beta} =\left( \frac{c_Y}{2}\right)^\frac{\alpha}{\alpha-\beta}\frac{1}{C_Z\,C_X^\frac{\beta}{\alpha-\beta}}\:t_n^{-\frac\beta{\alpha-\beta}}.
\]
Clearly $t_n\to \infty$ since $\alpha>\beta$. For general $t>0$, take $t_n = \inf \{t_k: k\in \N, t_k\geq t\}$. Then
\begin{align*}
\rho(t) &\geq\rho(t_n)\\
	 &\geq \frac{\left( c_Y/2\right)^\frac{\alpha}{\alpha-\beta}}{C_Z\,C_X^\frac{\beta}{\alpha-\beta}} \:t_n^{-\frac\beta{\alpha-\beta}}\\
	 &\geq \frac{\left( c_Y/2\right)^\frac{\alpha}{\alpha-\beta}}{C_Z\,C_X^\frac{\beta}{\alpha-\beta}} \left(\frac{t_{n-1}}{t_n}\right)^\frac{\beta}{\alpha-\beta} \,t^{-\frac\beta{\alpha-\beta}}\\
	 &= \frac{\left( c_Y/2\right)^\frac{\alpha}{\alpha-\beta}}{C_Z\,C_X^\frac{\beta}{\alpha-\beta}} \left(\frac{n-1}{n}\right)^{\beta}t^{-\frac\beta{\alpha-\beta}}.
\end{align*}
As $t\to\infty$, so does $n$, and the $n$-dependent term converges to $1$.
\end{proof}

\begin{remark}\label{remark minimizer}
Generally elements like $x_n, y_n$ may not exist if the extremum is not attained. Otherwise, we can choose $x_n$ such that $\|x_n - y_n\|_Z$ is sufficiently close to its infimum and $\|(A_n-A) y_n\|$ is sufficiently close to its supremum. To simplify our presentation, we assume that the supremum and infimum are attained.

The choice of $x_n$ as a minimizer is valid if 
\begin{enumerate}
\item $X$ embeds into $Z$ compactly, so the minimum of the continuous function $\|\cdot - y\|_Z$ is attained on the compact set $\{\|\cdot\|_X\leq t_k\}$, or 
\item the embedding $X\embeds Z$ maps closed bounded sets to closed sets and $Z$ admits continuous projections onto closed convex sets (for example, $Z$ is uniformly convex).
\end{enumerate}
In the applications below, the first condition will be met. 
\end{remark}

\subsection{Improved Estimate}

In the previous section, we have shown by elementary means that the estimate
\[
\liminf_{t\to\infty} \left(t^\gamma \sup_{\|y\|\leq 1} \inf_{\|x\|_X\leq t} \|x-y\|_Z\right) \geq c>0
\]
holds for suitable $\gamma$ if a sequence of linear maps between $Z$ and another Banach space $W$ behaves very differently on subspaces $X$ and $Y$ of $Z$. So intuitively, on each scale $t>0$ there exists an element $y_t\in B^Y$ such that $y_t$ is poorly approximable by elements in $X$ on this scale. In this section, we establish that there exists a single point $y\in Y$ which is poorly approximable across infinitely many scales. This statement has applications in Wasserstein gradient flows for machine learning which we discuss in a companion article \cite{dynamic_cod}.

\begin{lemma}\label{lemma slow point}
Let $X, Y, Z$ be Banach spaces such that $X, Y\embeds Z$. Assume that $A_n, A\in L(Z, W)$ are operators such that
\[
\|A_n - A\|_{L(X,W)} \leq C_X\,n^{-\alpha}, \qquad \|A_n-A\|_{L(Y,W)} \geq c_Y\,n^{-\beta}, \qquad \|A_n - A\|_{L(Z,W)} \leq C_Z
\]
for $\beta<\frac\alpha2$ and constants $C_X, c_Y, C_Z$. Then there exists $y\in B^Y$ such that for every $\gamma > \frac{\beta}{\alpha-\beta}$ we have
\[
\limsup_{t\to\infty} \left(t^\gamma \inf_{\|x\|_X\leq t} \|x-y\|_Z\right) = \infty.
\]
\end{lemma}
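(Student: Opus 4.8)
The plan is to run a Baire category argument in the complete metric space $(B^Y,\|\cdot\|_Y)$ and show that the set of admissible $y$ is residual, hence non-empty. Write $\dist_Z(z,B^X_t):=\inf_{\|x\|_X\le t}\|z-x\|_Z$; for $t>0$ and $M\in\N$ the set $G_{t,M}:=\{z\in B^Y: t^{\gamma}\dist_Z(z,B^X_t)>M\}$ is open in $B^Y$, since $z\mapsto\dist_Z(z,B^X_t)$ is $1$-Lipschitz for $\|\cdot\|_Z$ and therefore continuous for $\|\cdot\|_Y$. For fixed $\gamma>\tfrac{\beta}{\alpha-\beta}$,
\[
E_{\gamma}:=\Bigl\{z\in B^Y:\ \limsup_{t\to\infty}t^{\gamma}\dist_Z(z,B^X_t)=\infty\Bigr\}=\bigcap_{M\in\N}\ \bigcap_{T\in\N}\ \bigcup_{t\ge T}G_{t,M},
\]
and since a point of $\bigcap_{\gamma\in\mathbb Q,\ \gamma>\beta/(\alpha-\beta)}E_{\gamma}$ automatically belongs to $E_{\gamma}$ for every real $\gamma>\tfrac{\beta}{\alpha-\beta}$ (by monotonicity of $t\mapsto t^{\gamma}$ for $t\ge1$), the Baire category theorem reduces the lemma to showing that each open set $\bigcup_{t\ge T}G_{t,M}$ is dense in $B^Y$.

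For the density claim, fix $y_0\in B^Y$, $\eps>0$, $M,T\in\N$, $\gamma>\tfrac{\beta}{\alpha-\beta}$. If $t^{\gamma}\dist_Z(y_0,B^X_t)>M$ already holds for some $t\ge T$ we take $y=y_0$. Otherwise $\dist_Z(y_0,B^X_t)\le M\,t^{-\gamma}$ for every $t\ge T$. Fix $s\in(0,\eps/2)$. For $t\ge T$ large, Lemma \ref{lemma slow sequence} together with $\rho(\tau)=\sup_{z\in B^Y}\dist_Z(z,B^X_{\tau})$ furnishes $v_t\in B^Y$ with $\dist_Z(v_t,B^X_{2t/s})\ge\tfrac12\rho(2t/s)\ge\tfrac{c_{\rho}}{2}(2t/s)^{-\beta/(\alpha-\beta)}$, where $c_{\rho}>0$ is the constant of that lemma. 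Set $y:=(1-s)y_0+s\,v_t$; then $y\in B^Y$ and $\|y-y_0\|_Y=s\|v_t-y_0\|_Y\le2s<\eps$. For any $x\in B^X_t$, pick $x_0\in B^X_t$ realising $\dist_Z(y_0,B^X_t)$ (cf. Remark \ref{remark minimizer}); since $\|x-(1-s)x_0\|_X\le 2t$,
\[
\|x-y\|_Z\ \ge\ \bigl\|(x-(1-s)x_0)-s\,v_t\bigr\|_Z-(1-s)\|x_0-y_0\|_Z\ \ge\ s\,\dist_Z(v_t,B^X_{2t/s})-\dist_Z(y_0,B^X_t).
\]
Passing to the infimum over $x\in B^X_t$, multiplying by $t^{\gamma}$, and inserting the two lower bounds yields
\[
t^{\gamma}\dist_Z(y,B^X_t)\ \ge\ \tfrac{c_{\rho}}{2}\,2^{-\beta/(\alpha-\beta)}\,s^{\,1+\beta/(\alpha-\beta)}\;t^{\,\gamma-\beta/(\alpha-\beta)}\ -\ M,
\]
and the right-hand side tends to $+\infty$ because $\gamma-\tfrac{\beta}{\alpha-\beta}>0$. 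Choosing $t=t_*\ge T$ so large that this quantity exceeds $M$, and setting $v:=v_{t_*}$, the point $y$ lies within $\eps$ of $y_0$ and in $G_{t_*,M}\subseteq\bigcup_{t\ge T}G_{t,M}$; this proves density, hence the lemma.

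The one genuinely delicate step is the pair of displayed estimates, and within it the decision to approximate $y_0$ at the \emph{same} scale $t$ at which badness is probed: in the non-trivial case this keeps $t^{\gamma}\dist_Z(y_0,B^X_t)$ bounded by $M$, so that the divergent term $t^{\gamma-\beta/(\alpha-\beta)}$ --- essentially the Kolmogorov-width lower bound of Lemma \ref{lemma slow sequence} transplanted onto a single perturbed point --- overwhelms the error; the remaining ingredients (openness of $G_{t,M}$, completeness of $B^Y$, the trivial case, the reduction from real to rational $\gamma$) are routine. This route uses only $\beta<\alpha$, inherited from Lemma \ref{lemma slow sequence}. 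The sharper hypothesis $\beta<\tfrac{\alpha}{2}$ is what an alternative, fully constructive proof would require: building $y=\sum_k\lambda_k v_k$ from near-extremal width vectors $v_k$ at scales $\sim n_k^{\alpha-\beta}$ along a rapidly increasing sequence $n_k$, one controls the low-frequency ``head'' $\sum_{i<k}\lambda_i v_i$ through the uniform bound $\|A_{n_k}-A\|_{L(Z,W)}\le C_Z$ (after splitting each $v_i$ into an $X$-component and a small $Z$-remainder), which pins $n_k$ between a lower bound $\gtrsim(\cdot)^{1/(\alpha-\beta)}$ and an upper bound $\lesssim(\cdot)^{1/\beta}$ --- reconcilable precisely when $\beta<\alpha-\beta$.
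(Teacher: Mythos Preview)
Your Baire category argument is correct and takes a genuinely different route from the paper. The paper builds $y$ explicitly as a lacunary series $y=\sum_k \tfrac{\eps_k}{n_k}\,y_{m_k}$, choosing signs $\eps_k$ so that the functionals $w_{m_k}^*\circ(A_{m_k}-A)$ do not see cancellation from the earlier terms, and then tunes the two scales $n_k=2^{k^k}$ and $m_k=n_k^{k/(\alpha-\beta)}$ so that the tail of the series is negligible compared to $m_k^{-\beta}/n_k$. That balancing is exactly where the sharper hypothesis $\beta<\alpha/2$ enters, as you note in your closing paragraph. Your approach instead bootstraps off Lemma~\ref{lemma slow sequence}: at each scale you perturb an arbitrary centre $y_0$ by a small multiple of a near-extremal width vector, and the resulting lower bound $t^{\gamma-\beta/(\alpha-\beta)}$ blows up simply because $\gamma>\beta/(\alpha-\beta)$. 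This needs only $\beta<\alpha$, so you have in fact proved a stronger statement than the paper claims. The trade-off is that your $y$ is non-constructive --- it is merely a generic point of $B^Y$ --- whereas the paper's series gives an explicit element, together with an explicit subsequence $t_k$ along which the badness is witnessed (which is used in the companion article \cite{dynamic_cod} on gradient-flow training). A minor point: your $G_{t,M}$ depends silently on $\gamma$, so strictly speaking you should write $G^{\gamma}_{t,M}$ when you take the countable intersection over rational $\gamma$; and the existence of the minimiser $x_0$ relies on Remark~\ref{remark minimizer}, but the paper makes the same simplifying assumption.
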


The result is stronger than the previous one in that it fixes a single point $y$ which is poorly approximable in infinitely many scales $t_{n_k}$. While in each scale $t_n$ there exists a point $y_n$ which is poorly approximable, we only show that $y$ is poorly approximable in infinitely many scales, not in all scales. 

\begin{proof}[Proof of Lemma \ref{lemma slow point}]
Since $Y\embeds Z$, there exists a constant $C^Y>0$ such that $\|A_n - A\|_{Y^*}\leq C^Y$.

{\bf Definition of $y$.}
Choose sequences $y_n\in  B^Y$ and $w_n^* \in B^{W^*}$ such 
\[
w_n^*\circ (A_n-A)(y_n)\geq c_Y\,n^{-\beta}.
\]
Consider two sequences $n_k, m_k$ of strictly increasing integers such that
\[
\sum_{k=1}^\infty \frac1{n_k}\leq 1.
\]
We will impose further conditions below. Set
\[
y:= \sum_{k=0}^\infty \frac{\eps_k}{n_k}\,y_{m_k}
\]
where the signs $\eps_k\in\{-1,1\}$ are chosen inductively such that
\[
\eps_K\cdot w_{m_k}^*\circ (A_{m_K}-A)\left(\sum_{k=1}^{K-1}\frac{\eps_k}{n_k}y_{m_k}\right) \geq 0.
\]
Clearly
\[
\|y\|_Y \leq \sum_{k=1}^\infty \frac{|\eps_k|}{n_k}\,\|y_{m_k}\|_Y = \sum_{k=1}^\infty\frac1{n_k}=1.
\]
To shorten notation, define $L_k = w_{m_k}^* \circ (A_{m_k} - A) \in Z^*$ and note that the estimates for $A_{m_k} - A$ transfer to $L_k$.
If $\eps_K=1$ we have
\begin{align*}
L_ky &= L_k\left(\sum_{k=1}^{K-1}\frac{\eps_k}{n_k}y_{m_k}\right) + \frac1{n_K}\,L_k\,y_{m_K} + L_k\left(\sum_{k=K+1}^{\infty}\frac{\eps_k}{n_k}y_{m_k}\right)\\
	&\geq 0 +\frac1{n_K}\,L_ky_{m_K} - C^Y\sum_{l= K+1}^\infty \frac1{n_l}\\
	&\geq \frac1{n_K} \left(c_Y\,m_K^{-\beta} - C^Y\,n_K \sum_{l=k+1}^\infty \frac{1}{n_l}\right)
\end{align*}
\cor{where the infinite tail of the series is estimated by $\|L_k\|_{L(Y,W)}\leq C^Y$ and $\|y_{m_k}\|_Y\leq 1$. S}imilarly if $\eps_K=-1$ we obtain
\[
L_ky \leq - \frac1{n_K} \left(c_Y\,m_K^{-\beta} - C^Y\,n_K \sum_{l=K+1}^\infty \frac{1}{n_l}\right).
\]
{\bf Slow approximation rate.} Choose 
\[
t_k:= \frac{c_Y\,m_k^{\alpha-\beta}}{2\,C_X\,n_k},\qquad x_k \in \argmin_{\|x\|_X \leq t_k} \|x - y\|_Z.
\]

Then
\begin{align*}
\frac1{n_k} \left(c_Y\,m_k^{-\beta} - C^Y\,n_k \sum_{l=k+1}^\infty \frac{1}{n_l}\right)&\leq \big|L_ky\big|\\
	&\leq \big| L_k(y-x_k)\big| + \big|L_kx_k\big|\\
	&\leq C_Z\,\|y-x_k\|_Z  +\|A_{m_k}-A\|_{X^*}\,\|x_k\|_X\\
	&\leq C_Z\,\|y-x_k\|_Z + C_X\,m_k^{-\alpha}\,t_k.
\end{align*}
Since $t_k$ was chosen precisely such that 
\[
C_X m_k^{-\alpha}t_k = \frac{c_Y}{2\,n_k}\,m_k^{-\beta}, 
\] we obtain that
\begin{equation}\label{eq lower estimate}
\frac1{2C_Z\,n_k} \left(c_Y\,m_k^{-\beta} - 2C^Y\,n_k \sum_{l=k+1}^\infty \frac{1}{n_l}\right) \leq \|y-x_k\|_Z = \min_{\|x\|_X \leq t_k} \|x- y\|_Z.
\end{equation}
For this lower bound to be meaningful, the first term in the bracket has to dominate the second term.
We specify the scaling relationship between $n_k$ and $m_k$ as 
\[
m_k = n_k^\frac{ k }{\alpha-\beta}.
\]
In this definition, $m_k$ is not typically an integer unless $\frac1{\alpha-\beta}$ is an integer (or, to hold for a subsequence, rational). In the general case, we choose the integer $\tilde m_k$ closest to $m_k$. To simplify the presentation, we proceed with the non-integer $m_k$ and note that the results are insensitive to perturbations of order $1$. 

We obtain
\[
 t_k = \frac{c_Y}{2\,C_X} \frac{m_k^{\alpha-\beta}}{n_k} = \frac{c_Y}{2C_X}\,n_k^{ k -1}, \qquad \frac{m_k^{-\beta}}{n_k} = n_k^{-\frac{\beta k }{\alpha-\beta}-1} = n_k^{-\frac{\beta(k-1)  +\alpha}{\alpha-\beta}} 
=\left(\frac{2C_X}{c_Y}\,t_k\right)^{-\frac\beta{\alpha-\beta} -\frac{\alpha}{( k -1)(\alpha-\beta)}}.
\]
In particular, note that $t_k\to \infty$ as $k\to\infty$. In order for 
\[
n_k \sum_{l=k+1}^\infty \frac1{n_l}
\]
to be small, we need $n_k$ to grow super-exponentially. Note that $\frac{\beta}{\alpha-\beta} \leq 1$ since $\beta\leq \frac\alpha2$.
We specify $n_k = 2^{(k^k)}$ and compute
\begin{align*}
 \sum_{l=k+1}^\infty \frac1{n_l} &= \sum_{l=1}^\infty 2^{-\big((k+l)^k\,( k +l)^l\big)}\quad
 	 \leq \sum_{l=1}^\infty 2^{-(k^k\,(k+l)^l)}\quad
	 = \sum_{l=1}^\infty \left(\frac1{n_k}\right)^{\big((k+l)^l\big)}\\
	& \leq \frac 2{n_k^{k+1}}\quad
	 \ll n_k^{-\frac{\beta k }{\alpha-\beta}-1}\quad
	 = \frac{m_k^{-\beta}}{n_k}
\end{align*}
for large enough $k$. Thus we can neglect the negative term on the left hand side of \eqref{eq lower estimate} at the price of a slightly smaller constant. Thus 
\[
\frac{c_Y}{4C_Z}\,\left(\frac{2C_X}{c_Y}\,t_k\right)^{-\frac\beta{\alpha-\beta} -\frac{\alpha}{( k -1)(\alpha-\beta)}} = \frac{c_Y}{4C_Z\,n_k} \,m_k^{-\beta} \leq \min_{\|x\|_X \leq t_k} \|x - y\|_Z.
\]
Finally, we conclude that for all $\gamma>\frac{\beta}{\alpha-\beta}$ we have
\begin{align*}
\limsup_{t\to\infty}\left(t^\gamma \,\inf_{\|x\|_X\leq t\}} \|x-y\|_Z\right) &\geq \limsup_{k\to\infty} \left(t_k^\gamma \,\inf_{\|x\|_X\leq t_k\}} \|x-y\|_Z\right) \\
	&\geq C_{X,Y,Z}\,\limsup_{k\to\infty} t_k^{\gamma -\frac\beta{\alpha-\beta} - \frac{\alpha}{( k -1)(\alpha-\beta)}}\\
	&= \infty.
\end{align*}
\end{proof}

\section{Approximating Lipschitz Functions by Functions of Low Complexity}\label{section concrete}

In this section, we apply Lemma \ref{lemma slow point} to the situation where general Lipschitz functions are approximated by functions in a space with much lower complexity. Examples include function spaces for infinitely wide neural networks with a single hidden layer and spaces for deep ResNets of bounded width. For simplicity, we first consider uniform approximation and then modify the ideas to also cover $L^2$-approximation.

\subsection{Approximation in $L^\infty$}
Consider the case where 
\begin{enumerate}
\item $Z$ is the space of continuous functions on the unit cube $Q=[0,1]^d\subset \R^d$ with the norm
\[
\|\phi\|_Z = \sup_{x\in Q} \phi(x), 
\]
\item $Y$ is the space of Lipschitz-continuous functions with the norm
\[
\|\phi\|_Y = \sup_{x\in Q} \phi(x) + \sup_{x\neq y} \frac{|\phi(x) - \phi(y)|}{|x-y|}, \text{ and}
\]
\item $X$ is a Banach space of functions such that
\begin{itemize}
\item $X$ embeds continuously into $Z$,
\item the Monte-Carlo estimate 
\[
\E_{X_i \sim \L^d|_Q\text{ iid}} \left\{\sup_{\phi \in B^X} \left[\frac1n\sum_{i=1}^n\phi(X_i) - \int_Q\phi(x)\,\dx\right]\right\} \leq  \frac{C_X}{\sqrt{n}}
\]
holds. \cor{Here and in the following when no other measure is specified, we assume that integrals are taken with respect to Lebesgue measure.}
\end{itemize}
\end{enumerate}

Examples of admissible spaces for $X$ are Barron space for two-layer ReLU networks and the compositional function space for deep ReLU ResNets of finite width, see \cite{weinan2019priori,E:2018ab,weinan2019lei}. A brief review of Barron space is provided in Appendix \ref{appendix barron}. The Monte-Carlo estimate is proved by estimating the Rademacher complexity of the unit ball in the respective function space. For Barron space, $C_X = 2\sqrt{2\,\log(2d)}$ and for compositional function space $C_X = e^2\,\sqrt{\log(2d)}$, see \cite[Theorems 6 and 12]{weinan2019lei}.

We observe the following: If $X$ is a vector of iid random variables sampled from the uniform distribution on $Q$, then
\begin{align*}
	\sup_{\phi \text{ is 1-Lipschitz}} \left(\frac1n\sum_{i=1}^n\phi(X_i) - \int_Q\phi(x)\,\dx\right)
	&= W_1\left(\L^d|_Q, \:\frac1n\sum_{i=1}^n\delta_{X_i}\right) 
\end{align*}
is the $1$-Wasserstein distance between $d$-dimensional Lebesgue measure on the cube and the empirical measure generated by the random points -- see \cite[Chapter 5]{villani2008optimal} for further details on Wasserstein distances and the link between Lipschitz functions and optimal transport theory. The distance on $\R^d$ for which the Wasserstein transportation cost is computed is the same for which $\phi$ is $1$-Lipschitz.

Empirical measures converge to the underlying distribution slowly in high dimension \cite{fournier2015rate}, by which we mean that
\[
\E_{X\sim \big(\L^d|_Q\big)^n}W_1\left(\L^d|_Q,\: \frac1n\sum_{i=1}^n\delta_{X_i}\right) \geq c_d\,n^{-1/d}
\]
for some dimension-dependent constant $d$. Observe that also
\begin{align*}
\sup_{\phi \text{ is 1-Lipschitz}} \left(\frac1n\sum_{i=1}^n\phi(X_i) - \int_Q\phi(x)\,\dx\right)
	&= \sup_{\phi \text{ is 1-Lipschitz}, \:\phi(0) = 0} \left(\frac1n\sum_{i=1}^n\phi(X_i) - \int_Q\phi(x)\,\dx\right)\\
	&\leq \big[1+\diam(Q)\big]\, \sup_{\|\phi\|_{Y}\leq 1,\: \phi(0) = 0} \left(\frac1n\sum_{i=1}^n\phi(X_i) - \int_Q\phi(x)\,\dx\right)\\
	&\leq \big[1 + \diam(Q)\big] \, \sup_{\|\phi\|_{Y}\leq 1} \left(\frac1n\sum_{i=1}^n\phi(X_i) - \int_Q\phi(x)\,\dx\right)
\end{align*}
where $\diam(Q)$ is a diameter of the $d$-dimensional unit cube with respect to the norm for which $\phi$ is $1$-Lipschitz. Here we used that  replacing $\phi$ by $\phi + c$ for $c\in \R$ does not change the difference of the two expectations, and that on the space of functions with $\phi(0) = 0$ the equivalence
\[
[\phi]_Y := \sup_{x\neq y} \frac{|\phi(x) - \phi(y)|}{|x-y|} \leq \|\phi\|_Y \leq \big(1 + \diam(Q)\big)\,[\phi]_Y
\]
holds. By $\omega_d$ we denote the Lebesgue measure of the unit ball in $\R^d$ with respect to the correct norm.

\begin{lemma}
For every $n\in\N$ we can choose $n$ points $x_1,\dots, x_n$ in $Q$ such that
\[
\sup_{\phi \in B^Y} \left[\frac1n\sum_{i=1}^n\phi(x_i) - \int_Q\phi(x)\,\dx\right] \geq \frac d{d+1}\,\frac1{\big[(d+1)\,\omega_d\big]^{\frac1d}}\, \frac{1}{1 + \diam(Q)}\,\,n^{-1/d}
\]
and
\[
\sup_{\phi \in B^X} \left[\frac1n\sum_{i=1}^n\phi(x_i) - \int_Q\phi(x)\,\dx\right] \leq \frac{C_X}{n^{1/2}}.
\]
\end{lemma}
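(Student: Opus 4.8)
The plan is to establish the two inequalities by entirely separate arguments: the first (lower) bound for $\sup_{\phi\in B^Y}$ will be shown to hold for \emph{every} configuration of $n$ points in $Q$, while the second (upper) bound for $\sup_{\phi\in B^X}$ will be obtained for \emph{some} configuration by invoking the Monte--Carlo hypothesis on $X$. Since the lower bound is universal, any configuration that certifies the upper bound will automatically satisfy both, so it suffices to produce one such configuration of points.

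For the upper bound I would first observe that $B^X$ is symmetric, so for each realization $(X_1,\dots,X_n)$ the quantity $\sup_{\phi\in B^X}\bigl[\frac1n\sum_{i=1}^n\phi(X_i)-\int_Q\phi\bigr]$ coincides with $\sup_{\phi\in B^X}\bigl|\frac1n\sum_{i=1}^n\phi(X_i)-\int_Q\phi\bigr|$ and is in particular nonnegative. By the standing Monte--Carlo assumption its expectation under $X\sim(\L^d|_Q)^{n}$ is at most $C_X n^{-1/2}$, and a nonnegative random variable with expectation at most $a$ cannot exceed $a$ almost surely (otherwise its expectation would be strictly larger than $a$); hence the event $\bigl\{\sup_{\phi\in B^X}[\cdots]\le C_X n^{-1/2}\bigr\}$ has positive probability. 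Any $x_1,\dots,x_n\in Q$ in this event give the second inequality.

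For the lower bound I would fix arbitrary points $x_1,\dots,x_n\in Q$ and use the Kantorovich--Rubinstein identity recalled before the lemma, which rewrites $\sup_{\phi\text{ is }1\text{-Lipschitz}}\bigl[\frac1n\sum_{i=1}^n\phi(x_i)-\int_Q\phi\bigr]$ as $W_1\bigl(\L^d|_Q,\frac1n\sum_{i=1}^n\delta_{x_i}\bigr)$. Testing the supremum against the admissible competitor $\phi(x)=-\dist(x,\{x_1,\dots,x_n\})$, which is $1$-Lipschitz and vanishes at every $x_i$, bounds it below by $\int_Q\dist(x,\{x_1,\dots,x_n\})\dx$. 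For any $r>0$ the integrand is at least $r$ off the union $\bigcup_{i=1}^n B(x_i,r)$, whose Lebesgue measure is at most $n\,\omega_d\,r^d$ while $\L^d(Q)=1$, so the integral is at least $r(1-n\,\omega_d\,r^d)$; maximising this over $r$ at $r=[(d+1)\,n\,\omega_d]^{-1/d}$ yields the value $\frac{d}{d+1}\,[(d+1)\,n\,\omega_d]^{-1/d}=\frac{d}{d+1}\,[(d+1)\,\omega_d]^{-1/d}\,n^{-1/d}$. Finally, the inequality $\sup_{\phi\text{ is }1\text{-Lipschitz}}[\cdots]\le(1+\diam Q)\,\sup_{\phi\in B^Y}[\cdots]$ established before the lemma converts this into precisely the claimed lower bound, valid at the configuration selected in the previous paragraph.

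The difficulty here is not conceptual but a matter of bookkeeping: I need to carry out the ball-covering estimate and the one-variable optimisation in $r$ carefully enough to land on the exact constant $\frac{d}{d+1}[(d+1)\,\omega_d]^{-1/d}$, and to keep in mind throughout that the ball $B(x_i,r)$, the volume $\omega_d$ of the unit ball, and the diameter $\diam Q$ must all be understood with respect to one and the same norm on $\R^d$ — the one for which the competing functions $\phi$ are $1$-Lipschitz and with respect to which the Wasserstein transportation cost is computed.
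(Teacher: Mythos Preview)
Your proposal is correct and follows essentially the same approach as the paper: the $Y$-lower bound is shown to hold for \emph{every} configuration via the ball-covering estimate and one-variable optimisation, and the $X$-upper bound is obtained for \emph{some} configuration from the Monte--Carlo expectation bound. The only cosmetic difference is that you bound $W_1$ from below by plugging in the explicit dual competitor $\phi=-\dist(\cdot,\{x_1,\dots,x_n\})$, whereas the paper argues on the primal side (any transport plan must move mass $\ge 1-\omega_d\eps^d$ a distance $\ge \eps n^{-1/d}$); after the substitution $r=\eps n^{-1/d}$ the two computations are identical and yield the same constant.
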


\begin{proof}
First, we prove the following. {\em Claim:} Let $x_1,\dots,x_n$ be any collection of $n$ points in $Q$. Then 
\[
 W_1\left(\L^d|_Q, \: \frac1n\sum_{i=1}^n\delta_{x_i}\right) \geq \frac d{d+1}\,\frac1{\big[(d+1)\,\omega_d\big]^{\frac1d}}\,n^{-1/d}.
\]

{\em Proof of claim:} Choose $\eps>0$ and consider the set
\[
U = \bigcup_{i=1}^n B_{\eps\,n^{-1/d}}(x_i).
\]
We observe that 
\[
\L^d(U\cap Q) \leq \L^d(U) \leq \sum_{i=1}^n \L^d\big(B_{\eps\,n^{-1/d}}(x_i)\big) = n\,\omega_d\,\big(\eps\,n^{-1/d}\big)^d = \omega_d\,\eps^d.
\]
So any transport plan between $\L^d|_Q$ and the empirical measure needs to transport mass $\geq 1-\omega_d\,\eps^d$ by a distance of at least $\eps\,n^{-1/d}$. We conclude that
\[
W_1\left(\L^d|_Q,\: \frac1n\sum_{i=1}^n\delta_{x_i}\right) \geq\sup_{\eps\in (0,1)} \big(1-\omega_d\,\eps^d\big)\eps\,n^{-1/d}.
\]
The infimum is attained when
\[
0 = 1 - (d+1) \omega_d \eps^d\quad  \LRa \quad \eps = \big[(d+1)\,\omega_d\big]^{-\frac1d} \quad \Ra\quad 1-\omega_d\eps^d = 1 - \frac1{d+1} = \frac d{d+1}.
\]
This concludes the proof of the claim.

{\em Proof of the Lemma:} Using the claim, any $n$ points $x_1,\dots,x_n$ such that 
\[
\sup_{\phi \in B^X} \left[\frac1N\sum_{i=1}^N\phi(x_i) - \int_Q\phi(x)\,\dx\right] \leq \E \left\{\sup_{\phi \in B^X} \left[\frac1N\sum_{i=1}^N\phi(X_i) - \int_Q\phi(x)\,\dx\right]\right\}  \leq \frac{C_X}{n^{1/2}}  
\]
satisfy the conditions.
\end{proof}

For any $n$, we fix such a collection of points $x_1^n,\dots, x^n_n$ and define
\[
A_n: Z\to \R, \quad A_n(\phi) = \frac1n\sum_{i=1}^n \phi(x^n_i), \qquad A:Z\to\R, \quad A(\phi) = \int_Q \phi(x)\dx.
\]
Clearly 
\[
|A\phi|, \:|A_n\phi| \leq \|\phi\|_{C^0} = \|\phi\|_Z.
\]
Thus we can apply Lemma \ref{lemma slow point} with
\[
\frac\beta{\alpha-\beta}= \frac{\frac1d}{\frac12-\frac1d} = \frac1{d\,\frac{d-2}{2d}} = \frac2{d-2}.
\]

\begin{corollary}
There exists a $1$-Lipschitz function $\phi$ on $Q$ such that 
\[
\limsup_{t\to\infty} \left(t^\gamma\,\inf_{\|f\|_X \leq t}\|\phi - f\|_{L^\infty(Q)}\right) = \infty.
\]
for all $\gamma> \frac{2}{d-2}$.
\end{corollary}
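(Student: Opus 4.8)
The plan is to read the statement off Lemma~\ref{lemma slow point}, applied with $W=\R$, $Z=C^0(Q)$, $Y$ the Lipschitz space, $X$ the Barron space (or the compositional function space), and with the functionals $A_n\phi=\frac1n\sum_{i=1}^n\phi(x_i^n)$ and $A\phi=\int_Q\phi\dx$ already fixed above. Indeed, the point $y\in B^Y$ that Lemma~\ref{lemma slow point} produces satisfies $\|y\|_{\infty}+[y]_{\mathrm{Lip}}=\|y\|_Y\le 1$, so it is in particular $1$-Lipschitz, and since the $Z$-norm is the $L^\infty(Q)$-norm we have $\inf_{\|x\|_X\le t}\|x-y\|_Z=\inf_{\|f\|_X\le t}\|\phi-f\|_{L^\infty(Q)}$ with $\phi:=y$. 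Thus the proof reduces to checking the three operator-norm hypotheses and the parameter inequality $\beta<\alpha/2$.

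First I would record the norm bounds. The crude estimate $\|A_n-A\|_{L(Z,\R)}\le 2$ is immediate from $|A_n\phi|,|A\phi|\le\|\phi\|_Z$, so $C_Z=2$. For the $Y$-bound, the point-selection Lemma above says the nodes $x_1^n,\dots,x_n^n$ can be chosen so that, simultaneously, $\sup_{\phi\in B^Y}\big(A_n\phi-A\phi\big)\ge c_Y\,n^{-1/d}$ with $c_Y=\frac{d}{d+1}\big[(d+1)\omega_d\big]^{-1/d}(1+\diam Q)^{-1}$ and $\sup_{\phi\in B^X}\big(A_n\phi-A\phi\big)\le C_X\,n^{-1/2}$, the latter being exactly the Monte-Carlo/Rademacher estimate included in the standing assumptions on $X$. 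Because $B^X$ and $B^Y$ are symmetric under $\phi\mapsto-\phi$, these signed suprema equal the operator norms, so we may take $\alpha=\tfrac12$ and $\beta=\tfrac1d$, and then
\[
\frac{\beta}{\alpha-\beta}=\frac{1/d}{1/2-1/d}=\frac{2}{d-2},
\]
which matches the exponent in the statement. The continuity of the embedding $X\embeds Z$ is likewise part of the standing assumptions, so Lemma~\ref{lemma slow point} applies verbatim.

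The two points that need care are the following, and they are also where the only real difficulty lies. Lemma~\ref{lemma slow point} requires $\beta<\alpha/2$, i.e. $1/d<1/4$, which holds exactly for $d\ge 5$; for $d\in\{3,4\}$ one should instead invoke Lemma~\ref{lemma slow sequence} (which only needs $\beta<\alpha$) and settle for the weaker scale-by-scale conclusion, so the corollary is really a statement about $d\ge 5$. Second, the minimizers $x_k\in\argmin_{\|x\|_X\le t_k}\|x-y\|_Z$ used inside the proof of Lemma~\ref{lemma slow point} must exist; this is Remark~\ref{remark minimizer}(1), which I would justify by noting that $B^X$ consists of functions with a common Lipschitz bound, hence is precompact in $C^0(Q)$ by Arzel\`a--Ascoli, and is closed in $C^0(Q)$ because the $X$-norm is lower semicontinuous with respect to uniform convergence. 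Beyond these items the argument is purely mechanical: all the analytic content has been front-loaded into Lemma~\ref{lemma slow point} and the point-selection Lemma, and the corollary is simply their composition.
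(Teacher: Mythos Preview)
Your proposal is correct and follows exactly the paper's approach: apply Lemma~\ref{lemma slow point} with $Z=C^0(Q)$, $W=\R$, the functionals $A_n,A$ already fixed, and $\alpha=\tfrac12$, $\beta=\tfrac1d$, reading off $\beta/(\alpha-\beta)=2/(d-2)$. You are in fact more careful than the paper on two points it leaves implicit: the constraint $\beta<\alpha/2$ forces $d\ge 5$, and the minimizer in Remark~\ref{remark minimizer} exists by Arzel\`a--Ascoli compactness of $B^X$ in $C^0(Q)$.
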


\subsection{Approximation in $L^2$}

Point evaluation functionals are no longer well defined if we choose $Z= L^2(Q)$. We therefore need to replace $A_n$ by functionals of the type
\[
A_n(\phi) = \frac1n\sum_{i=1}^n \fint_{B_{\eps_n}(X_i^n)}\phi\dx
\]
for sample points $X_i^n$ and find a balance between the radii $\eps_n$ shrinking too fast (causing the norms $\|A_n\|_{Z^*}$ to blow up) and $\eps_n$ shrinking too slowly (leading to better approximation properties on Lipschitz functions). 

We interpret $Q$ as the unit cube for function spaces, but as a $d$-dimensional flat torus when considering balls. Namely the ball $B_\eps(x)$ in $Q$ is to be understood as projection of the ball of radius $\eps>0$ around $[x]$ on $\R^d/ \Z^d$ onto $Q$. This allows us to avoid boundary effects. 

\begin{lemma}\label{lemma L2 approximation}
For every $n\in\N$ we can choose $n$ points $x_1,\dots, x_n$ in $Q$ such that the estimates
\begin{align*}
\sup_{\phi \in B^X} \left[\frac1n\sum_{i=1}^n \fint_{B_{\eps_n}(x_i)}\phi\dx- \int_Q\phi(x)\,\dx\right] &\leq \frac{3\,C_X}{n^{1/2}}\\
\sup_{\phi \in B^Y} \left[\frac1n\sum_{i=1}^n \fint_{B_{\eps_n}(x_i)}\phi\dx - \int_Q\phi(x)\,\dx\right] &\geq c_d\,n^{-1/d}\\
\sup_{\phi \in B^Z} \left[\frac1n\sum_{i=1}^n \fint_{B_{\eps_n}(x_i)}\phi\dx - \int_Q\phi(x)\,\dx\right] &\leq C_d
\end{align*}
hold. $c_d, C_d$ are dimension dependent constants and 
\[
\eps_n  = \gamma_d\,n^{-1/d}
\]
for a dimension-dependent $\gamma_d>0$.
\end{lemma}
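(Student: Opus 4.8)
The plan is to mirror the structure of the $L^\infty$ version while carefully tracking the three error contributions introduced by replacing point evaluations $\phi\mapsto\phi(x_i)$ with ball averages $\phi\mapsto\fint_{B_{\eps_n}(x_i)}\phi\,\dx$. Write $A_n^{\mathrm{pt}}(\phi)=\frac1n\sum_i\phi(x_i)$ for the point-evaluation functional of the previous subsection and $A_n^{\mathrm{av}}(\phi)=\frac1n\sum_i\fint_{B_{\eps_n}(x_i)}\phi\,\dx$ for the averaged one. The first observation is that $A_n^{\mathrm{av}}$ makes sense on $Z=L^2(Q)$: by Jensen (Cauchy–Schwarz), $\big|\fint_{B_{\eps_n}(x)}\phi\,\dx\big|\le\big(\fint_{B_{\eps_n}(x)}|\phi|^2\,\dx\big)^{1/2}\le(\omega_d\eps_n^d)^{-1/2}\|\phi\|_{L^2}$, and since the balls are interpreted on the torus and all have equal measure, summing and using Cauchy–Schwarz in the $i$-index gives $|A_n^{\mathrm{av}}(\phi)|\le(\omega_d\eps_n^d)^{-1/2}\,n^{-1/2}\,\big(\sum_i\int_{B_{\eps_n}(x_i)}|\phi|^2\big)^{1/2}$; because each point of $Q$ lies in at most finitely many (in fact $O(1)$, uniformly controllable) of the $x_i$-balls when the $x_i$ are chosen as a near-optimal packing, this is bounded by $C(\omega_d\eps_n^d)^{-1/2}n^{-1/2}\|\phi\|_{L^2}\cdot n^{1/2}=C_d\|\phi\|_Z$ once $\eps_n=\gamma_d n^{-1/d}$, so $\eps_n^d\sim n^{-1}$ and the $n$-powers cancel. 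This is exactly the third (uniform-in-$Z$) bound, and it is the reason for the precise scaling of $\eps_n$.

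For the $X$-estimate, I would split $A_n^{\mathrm{av}}-A=(A_n^{\mathrm{av}}-A_n^{\mathrm{pt}})+(A_n^{\mathrm{pt}}-A)$. The second term is controlled in expectation by the assumed Monte–Carlo bound $C_X n^{-1/2}$, and the points $x_1,\dots,x_n$ are chosen (as in the previous lemma) to realize a value no larger than this expectation simultaneously with the Wasserstein lower bound. For the first term, note $\fint_{B_{\eps_n}(x_i)}\phi\,\dx-\phi(x_i)=\fint_{B_{\eps_n}(x_i)}\big(\phi(z)-\phi(x_i)\big)\,\dz$; but $X$ controls only the $C^0$-norm, not a modulus of continuity, so this cannot be bounded pointwise. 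The trick is instead to view $\frac1n\sum_i\fint_{B_{\eps_n}(x_i)}\phi\,\dx=\int_Q\phi(z)\,\d\nu_n(z)$ where $\nu_n=\frac1n\sum_i\big(\L^d|_{B_{\eps_n}(x_i)}/(\omega_d\eps_n^d)\big)$ is the \emph{smoothed} empirical measure, so $A_n^{\mathrm{av}}(\phi)-A(\phi)=\int_Q\phi\,\d(\nu_n-\L^d|_Q)$, and $\sup_{\phi\in B^X}$ of this is at most $C_X$ times the $W_1$-type quantity $\sup_{\|\phi\|_X\le 1}\int\phi\,\d(\nu_n-\L^d|_Q)$; comparing $\nu_n$ to the genuine empirical measure $\mu_n=\frac1n\sum_i\delta_{x_i}$ costs $\sup_{\phi\in B^X}\big|\int\phi\,\d(\nu_n-\mu_n)\big|\le\frac1n\sum_i\sup_{\phi\in B^X}\big|\fint_{B_{\eps_n}(x_i)}\phi-\phi(x_i)\big|$. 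To make this $O(n^{-1/2})$ I would use that $B^X$ functions are $C^0$ and that shrinking a ball is itself a Monte–Carlo-type averaging: $\fint_{B_{\eps}(x)}\phi\,\dx-\phi(x)$ over $B^X$ is bounded by the Rademacher/Monte–Carlo constant of $X$ on the rescaled ball, which after unwinding gives a bound of the form $C_X\eps_n^{?}$ or $C_X n^{-1/2}$; the cleanest route is to absorb everything into "at most $2C_X n^{-1/2}$ extra" using that averaging over a measure never increases the Monte–Carlo discrepancy, yielding the stated constant $3C_X n^{-1/2}$.

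For the $Y$-estimate I would argue that smoothing by balls of radius $\eps_n=\gamma_d n^{-1/d}$ does not destroy the $W_1$ lower bound. Concretely, $\int_Q\phi\,\d(\nu_n-\L^d|_Q)$ for $1$-Lipschitz $\phi$ equals $\int\phi\,\d(\mu_n-\L^d|_Q)+\int\phi\,\d(\nu_n-\mu_n)$; the first term is at least $c_d' n^{-1/d}$ (possibly after choosing the $x_i$ to also be near-$W_1$-optimal, as in the previous lemma via the packing argument), while $\big|\int\phi\,\d(\nu_n-\mu_n)\big|\le\frac1n\sum_i\fint_{B_{\eps_n}(x_i)}|\phi(z)-\phi(x_i)|\,\dz\le\eps_n=\gamma_d n^{-1/d}$ since $\phi$ is $1$-Lipschitz and $\diam B_{\eps_n}\le 2\eps_n$. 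Choosing $\gamma_d$ small relative to the $W_1$ packing constant $\tfrac{d}{d+1}[(d+1)\omega_d]^{-1/d}$ (and dividing by $1+\diam(Q)$ to pass from the $1$-Lipschitz seminorm to $\|\cdot\|_Y$, exactly as before) leaves a net lower bound $c_d n^{-1/d}$ with a strictly positive $c_d$. I expect the main obstacle to be the $X$-estimate for the smoothing error $\int\phi\,\d(\nu_n-\mu_n)$: one must show this is $O(n^{-1/2})$ uniformly over $B^X$ using \emph{only} the continuous embedding $X\embeds Z=L^2$ and the Monte–Carlo bound, without any quantitative modulus of continuity for $X$-functions — the resolution is that averaging a function against any probability kernel is a convex combination, so the discrepancy $\sup_{\phi\in B^X}\int\phi\,\d(\nu_n-\L^d|_Q)$ is itself dominated by the original Monte–Carlo discrepancy (Jensen applied to the sup of a linear functional), giving the result with essentially no loss beyond the factor $3$; once this convexity observation is in place, the three bounds and the choice $\eps_n=\gamma_d n^{-1/d}$ fall out as above.
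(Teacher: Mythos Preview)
Your $Y$-estimate is essentially the paper's argument: compare the smoothed empirical measure $\nu_n$ to the point measure $\mu_n$ via the Wasserstein/Lipschitz bound, lose at most $O(\eps_n)=O(\gamma_d n^{-1/d})$, and choose $\gamma_d$ small. Good.

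The genuine gap is that your $Z$-bound and your $X$-bound are proved for \emph{incompatible} choices of points. For the $Z$-bound you say ``each point of $Q$ lies in at most $O(1)$ of the $x_i$-balls when the $x_i$ are chosen as a near-optimal packing''; for the $X$-bound you say the $x_i$ are ``chosen (as in the previous lemma) to realize a value no larger than the expectation''. A configuration realizing the Monte--Carlo expectation is produced by iid sampling, and iid samples of size $n$ in $Q$ with radius $\eps_n\sim n^{-1/d}$ do \emph{not} have bounded overlap --- there are clusters with arbitrarily large multiplicity with positive probability. Conversely, a deterministic packing has no a priori reason to satisfy the $X$-Monte--Carlo bound. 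The paper avoids this by proving the $Z$-bound \emph{probabilistically} as well: it computes $\E_S\big\|\sum_i 1_{B_{\eps_n}(X_i)}\big\|_{L^2}^2$, which already accounts for random overlaps, obtains a dimension-dependent bound once $\eps_n=\gamma_d n^{-1/d}$, and then uses Markov's inequality on both the $X$- and $Z$-functionals (each nonnegative) to find, by a union bound, a single sample that is simultaneously $X$-good (constant $3C_X$) and $Z$-good (constant $3C_d$). The $Y$-bound holds for every configuration, so no probabilistic argument is needed there.

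Your $X$-estimate also has a gap, though a smaller one. The ``convexity/Jensen'' step you invoke is correct in spirit, but as written it would require either that $B^X$ is closed under convolution with the ball-averaging kernel, or a bound on $\sup_{\phi\in B^X}\int\phi\,\d(\mu_n^y-\L^d)$ uniformly in the shift $y$, neither of which follows from the hypotheses. The missing ingredient is the \emph{translation invariance of the torus}: one writes
\[
\frac1n\sum_i\fint_{B_{\eps_n}(X_i)}\phi\,\dx - \int_Q\phi\,\dx \;=\; \fint_{B_{\eps_n}}\Big[\frac1n\sum_i\phi(X_i+y)-\int_Q\phi\,\dx\Big]\,\dy,
\]
pulls the supremum over $\phi\in B^X$ inside the $y$-integral (your Jensen step), and \emph{then} takes expectation over the iid $X_i$ and uses that $X_i$ and $X_i+y$ have the same law on the flat torus. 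This is what gives the expectation bound $C_X n^{-1/2}$ for the smoothed functional without any modulus-of-continuity assumption on $X$; Markov then yields the factor $3$.
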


\begin{proof}[Proof of Lemma \ref{lemma L2 approximation}]
{\bf $L^2$-estimate.} 
In all of the following, we rely on the interpretation of balls as periodic to avoid boundary effects. For a sample $S = (X_1,\dots,X_n)$ denote
\[
A_S(\phi) = \frac1n\sum_{i=1}^n \fint_{B_{\eps_n}(X_i)}\phi\dx
\]
Observe that
\begin{align*}
\sup_{\|\phi\|_{L^2}\leq 1}(A_{S}(\phi)-A(\phi)) &= \sup_{\|\phi\|_{L^2}\leq 1}\left[\frac1n\sum_{i=1}^n \fint_{B_{\eps_n}(X_i)}\phi\dx -\int\phi\dx\right]\\
	&= \sup_{\|\phi\|_{L^2}\leq 1, \int\phi=0}\frac1n\sum_{i=1}^n \fint_{B_{\eps_n}(X_i)}\phi\dx\\
	&= \sup_{\|\phi\|_{L^2}\leq 1, \int\phi=0} \frac1{n\,|B_{\eps_n}|}\int \left( \sum_{i=1}^n1_{B_{\eps_n}(X_i)}\right)\phi\dx\\
	&\leq \frac1{n\,\omega_d\,\eps_n^d} \left\| \sum_{i=1}^n1_{B_{\eps_n}(X_i)}\right\|_{L^2}.
\end{align*}
We compute
\begin{align*}
\left\| \sum_{i=1}^n1_{B_{\eps_n}(x_i)}\right\|_{L^2}^2&= \sum_{i=1}^n \left\|1_{B_{\eps_n}(x_i)}\right\|_{L^2}^2 + \sum_{i\neq j}\int 1_{B_{\eps_n}(x_i)}1_{B_{\eps_n}(x_j)}\dx\\
	&= n\,\omega_d|\eps_n|^d+ \sum_{i\neq j} \big|B_{\eps_n}(x_i) \cap B_{\eps_n}(x_j)\big|\showlabel\label{eq norm squared expression}
\end{align*}
It is easy to see that
\begin{align*}
\E_{(X_i, X_j) \sim U_{Q\times Q}} \big|B_{\eps_n}(X_i) \cap B_{\eps_n}(X_j)\big| &= \E_{X\sim U_Q} \big|B_{\eps_n}(X) \cap B_{\eps_n}(0)\big|\\
	&= \int_{B_{2\eps_n}} \big|B_{\eps_n}(x) \cap B_{\eps_n}(0)\big|\dx\\
	&= \eps_n^d \int_{B_2}\big|B_{\eps_n}\big(\eps_n\,x\big) \cap B_{\eps_n}(0)\big|\dx\\
	&= \eps_n^d \int_{B_2} \eps_n^d\,\big|B_1(x)\cap B_1(0)\big|\dx\\
	&= \eps_n^{2d}\,\omega_d2^d\,\frac1{\omega_d2^d} \int_{B_2} \big|B_1(x)\cap B_1(0)\big|\dx\\
	&= \eps_n^{2d}\,\bar c_d\,2^{d}\omega_d.\showlabel\label{eq intersection of balls}
\end{align*}
where
\[
\bar c_d := \frac1{\omega_d2^d} \int_{B_2} \big|B_1(x)\cap B_1(0)\big|\dx
\]
is a dimension-dependent constant.
Thus combining \eqref{eq norm squared expression} and \eqref{eq intersection of balls} we find that
\begin{align*}
\E_{S\sim (\L^d|_Q)^n}\left\| \sum_{i=1}^n1_{B_{\eps_n}(X_i)}\right\|_{L^2}^2 &= \omega_d\,\eps_n^d\left[ n + n(n-1)\,\bar c_d\,(2\eps_n)^d\right]\\
	&\leq \omega_d\,n\eps_n^d\left[ 1+ \bar c_d2^d \,n\eps_n^d\right].
\end{align*}
This allows us to estimate
\begin{align*}
\E_S\bigg[\sup_{\|\phi\|_{L^2}\leq 1}\big(A_S\phi - A\phi\big)\bigg] &\leq \frac1{n\,\omega_d\,\eps_n^d} \E_S\left\| \sum_{i=1}^n1_{B_{\eps_n}(X_i)}\right\|_{L^2}\\
	&\leq \frac1{n\,\omega_d\,\eps_n^d} \left(\E_S\left\| \sum_{i=1}^n1_{B_{\eps_n}(X_i)}\right\|_{L^2}^2\right)^\frac12\\
	&\leq \frac1{n\,\omega_d\,\eps_n^d} \sqrt{\omega_d\,n\eps_n^d\big[1+ c_d2^d\,n\eps_n^d\big]}\\
	&= \sqrt{\frac{1+ \bar c_d2^d\,n\eps_n^d}{\omega_d\,n\eps_n^d}}\\
	&= \sqrt{\frac{1+ \bar c_d2^d\,\gamma_d^d}{\omega_d\,\gamma_d^d}}
\end{align*}
when we choose
\[
\eps_n = \gamma_d\,n^{-1/d}
\]
for a dimension-dependent constant $\gamma_d$.

{\bf Lipschitz estimate.} If $E\subset \R^d$ is open and bounded, denote by $U_E$ the uniform distribution on $E$. Note that
\[
W_1\big(\delta_0, U_{B_\eps}\big) = \fint_{B_\eps} |x| \dx = \fint_{B_1} \eps|x|\dx = \eps \,\fint_{B_1} |x|\dx.
\]
Since the set of all transport plans between two measures given as convex combinations of measures is larger than the set of all plans which transport one term of the combination to another, we find that 
\begin{align*}
W_1\left(\frac1n \sum_{i=1}^n\delta_{x_i}, \frac1n\sum_{i=1}^n U_{B_{\eps_n}(x_i)}\right) &= \inf\left\{ \int |x-y|\,\d\pi_{x,y} \:\bigg|\: \pi \in \mathcal P \left(\frac1n \sum_{i=1}^n\delta_{x_i},\frac1n\sum_{i=1}^n U_{B_{\eps_n}(x_i)}\right) \right\} \\
	&\leq \inf\left\{ \int |x-y|\,\d\pi_{x,y} \:\bigg|\: \pi = \frac1n \sum_{i=1}^n\pi_i, \:\:\pi_i\in \mathcal P \left(\delta_{x_i}, U_{B_{\eps_n}(x_i)}\right) \right\} \\
	&= \frac1n\sum_{i=1}^n W_{1}(\delta_{x_i},U_{B_{\eps_n}(x_i)})\\
	&= \eps_n \,\fint_{B_1} |x|\dx\\
	&= \gamma_d\,n^{-1/d}\fint_{B_1} |x|\dx.
\end{align*}
We find by the triangle inequality that
\begin{align*}
W_1\left(\L^d|_Q, \frac1n\sum_{i=1}^n U_{B_{\eps_n}(x_i)}\right) &\geq W_1\left(\L^d|_Q, \frac1n\sum_{i=1}^n \delta_{x_i}\right)  - W_1\left(\frac1n \sum_{i=1}^n\delta_{x_i}, \frac1n\sum_{i=1}^n U_{B_{\eps_n}(x_i)}\right)\\
	&\geq \left[\frac d{d+1}\,\frac1{\big[(d+1)\,\omega_d\big]^{\frac1d}} - \gamma_d\fint_{B_1} |x|\dx\right]\,n^{-1/d}
\end{align*}
When we choose $\gamma_d$ small enough, we conclude as before that 
\[
\sup_{\phi \text{ is 1-Lipschitz}} \left(\frac1n\sum_{i=1}^n\fint_{B_{\eps_n(x_i)}}\phi\dx - \int_Q\phi\,\dx\right) \geq \,c_d\,n^{-1/d}
\]
for some positive $c_d>0$. Recall that this holds for {\em all} empirical measures, and that the Lipschitz constant is an equivalent norm for our purposes.

{\bf $X$-estimate.} We compute that
\begin{align*}
\E_{X_i \sim \L^d|_Q\text{ iid}} &\left\{\sup_{\phi \in B^X} \left[\frac1n\sum_{i=1}^n\int_{B_{\eps_n}(X_i)}\phi\dx - \int_Q\phi(x)\,\dx\right]\right\}\\
	&= \E_{X_i \sim \L^d|_Q\text{ iid}} \left\{\sup_{\phi \in B^X} \left[\frac1n\sum_{i=1}^n\int_{B_{\eps_n}(X_i)}\phi\dx - \int_Q\phi(x)\,\dx\right]\right\}\\
	&= \E_{X_i \sim \L^d|_Q\text{ iid}} \left\{\sup_{\phi \in B^X} \left[\frac1n\sum_{i=1}^n\int_{B_{\eps_n}(X_i)}\left(\phi(y) - \int_Q\phi(x)\,\dx \right)\dy\right]\right\}\\
	&=  \E_{X_i \sim \L^d|_Q\text{ iid}} \left\{\sup_{\phi \in B^X} \left[\int_{B_{\eps_n}}\frac1n\sum_{i=1}^n\left(\phi(X_i+ y) - \int_Q\phi(x)\,\dx \right)\dy\right]\right\}\\
	&\leq \E_{X_i \sim \L^d|_Q\text{ iid}} \left\{\int_{B_{\eps_n}} \sup_{\phi \in B^X} \left[\frac1n\sum_{i=1}^n\left(\phi(X_i+ y) - \int_Q\phi(x)\,\dx \right)\right]\dy\right\}\\
	&= \E_{X_i \sim \L^d|_Q\text{ iid}} \left\{ \sup_{\phi \in B^X} \left[\frac1n\sum_{i=1}^n\left(\phi(X_i) - \int_Q\phi(x)\,\dx \right)\right]\right\}\\
	&\leq \frac{C_X}{\sqrt n}
\end{align*}
since $X_i$ and $X_i + y$ have the same law (where $X_i+y$ is interpreted as a shift on the flat torus).

{\bf Conclusion.} Since the random variables 
\[
\sup_{\phi \in B^X} \left[\frac1n\sum_{i=1}^n\left(\phi(X_i) - \int_Q\phi(x)\,\dx \right)\right]
\]
are non-negative, we find that by Chebyshev's inequality that
\[
(\L^d|_Q)^n \left(\left\{(X_1,\dots,X_n)\:\bigg|\:\sup_{\phi \in B^X} \left[A_{X}(\phi)-A(\phi)\right]>\frac{3C_X}{\sqrt n}\right\}\right) \leq \frac13
\]
and similarly 
\[
(\L^d|_Q)^n \left(\left\{(X_1,\dots,X_n)\:\bigg|\:\sup_{\phi \in B^Z} \left[A_{X}(\phi)-A(\phi)\right]>3\gamma_d\,n^{-1/d}\fint_{B_1} |x|\dx\right\}\right) \leq \frac13.
\]
Since the $Y$-estimate is satisfied for any empirical measure, we conclude that there exists a set of points $x_1,\dots, x_n$ in $Q$ such that $A_n:= A_{(x_1,\dots,x_n)}$ satisfies the conditions of the theorem.
\end{proof}

\begin{corollary}\label{corollary l2 slow}
There exists a $1$-Lipschitz function $\phi$ on $Q$ such that 
\[
\limsup_{t\to\infty} \left(t^\gamma\,\inf_{\|f\|_X \leq t}\|\phi - f\|_{L^2(Q)}^2\right) = \infty.
\]
for all $\gamma> \frac{4}{d-2}$.
\end{corollary}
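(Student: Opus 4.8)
The plan is to apply the abstract Lemma~\ref{lemma slow point} with $Z = L^2(Q)$ and $W = \R$, feeding it the three estimates established in Lemma~\ref{lemma L2 approximation}, and then to transfer the resulting bound on the $L^2$-distance to a bound on its square by elementary monotonicity.

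First I would set up the abstract data. Let $X$ be the given space, $Y$ the Lipschitz space, $Z = L^2(Q)$, $W = \R$, and for each $n$ fix the points $x_1^n,\dots,x_n^n$ produced by Lemma~\ref{lemma L2 approximation}; define $A_n\phi = \frac1n\sum_{i=1}^n\fint_{B_{\eps_n}(x_i^n)}\phi\dx$ and $A\phi = \int_Q\phi\dx$. Each $A_n$ is a continuous functional on $L^2(Q)$ — by Cauchy--Schwarz $|A_n\phi|\le |B_{\eps_n}|^{-1/2}\|\phi\|_{L^2(Q)}$ — as is $A$, so $A_n, A\in L(Z,W)$, and Lemma~\ref{lemma L2 approximation} gives $\|A_n - A\|_{X^*}\le 3C_X\,n^{-1/2}$, $\|A_n - A\|_{Y^*}\ge c_d\,n^{-1/d}$, $\|A_n - A\|_{Z^*}\le C_d$ (symmetry of $B^X,B^Y,B^Z$ turns the one-sided suprema of the lemma into the operator norms). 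Thus the hypotheses of Lemma~\ref{lemma slow point} hold with $\alpha = \tfrac12$, $\beta = \tfrac1d$, $C_X$ replaced by $3C_X$, and $C_Z = C_d$. Note that the requirement $\beta<\tfrac\alpha2$ amounts to $\tfrac1d<\tfrac14$, i.e.\ $d\ge 5$, which I take as a standing assumption (for $d\le 4$ the exponent $\tfrac4{d-2}$ is vacuous or undefined anyway).

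Next, Lemma~\ref{lemma slow point} yields a point $\phi\in B^Y$ such that, since $\tfrac{\beta}{\alpha-\beta} = \tfrac{1/d}{1/2-1/d} = \tfrac2{d-2}$, for every $\gamma'>\tfrac2{d-2}$
\[
\limsup_{t\to\infty}\Bigl(t^{\gamma'}\inf_{\|f\|_X\le t}\|\phi - f\|_{L^2(Q)}\Bigr) = \infty.
\]
Because $\|\phi\|_Y\le 1$ forces the Lipschitz seminorm $[\phi]_Y\le\|\phi\|_Y\le 1$, the function $\phi$ is $1$-Lipschitz, as required. To conclude, fix $\gamma>\tfrac4{d-2}$, put $\gamma' = \tfrac\gamma2>\tfrac2{d-2}$, and use that $s\mapsto s^2$ is increasing on $[0,\infty)$ to commute the square past the infimum, so that $t^\gamma\inf_{\|f\|_X\le t}\|\phi-f\|_{L^2(Q)}^2 = \bigl(t^{\gamma'}\inf_{\|f\|_X\le t}\|\phi-f\|_{L^2(Q)}\bigr)^2$; since a non-negative quantity whose $\limsup$ is $+\infty$ has squares with $\limsup$ equal to $+\infty$, the previous display gives the claim.

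I do not expect a genuine obstacle: the substantive content is already carried by Lemma~\ref{lemma L2 approximation} (the Monte-Carlo estimate on $B^X$ together with the Wasserstein lower bound on $B^Y$ and the mollification radius $\eps_n$) and by Lemma~\ref{lemma slow point}. The only points that require care are bookkeeping: checking that the \emph{stronger} hypothesis $\beta<\alpha/2$ of Lemma~\ref{lemma slow point} (as opposed to $\beta<\alpha$ in Lemma~\ref{lemma slow sequence}) is in force, which is precisely what pins down the dimensional threshold $d\ge 5$, and tracking the factor $2$ that turns the decay exponent $\tfrac2{d-2}$ for the $L^2$-norm into $\tfrac4{d-2}$ for the squared $L^2$-norm.
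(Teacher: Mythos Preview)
Your proposal is correct and follows exactly the route the paper intends: feed the three estimates of Lemma~\ref{lemma L2 approximation} into Lemma~\ref{lemma slow point} with $\alpha=\tfrac12$, $\beta=\tfrac1d$ to obtain the exponent $\tfrac{2}{d-2}$ for the $L^2$-distance, then square to get $\tfrac{4}{d-2}$ for the squared distance. The paper leaves the corollary unproved as an immediate consequence, and your write-up fills in precisely the bookkeeping (symmetry of the balls, the threshold $d\ge 5$ from $\beta<\alpha/2$, the squaring step) that is tacitly assumed.
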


\cor{
\begin{example}
In \cite{deep_barron}, `tree-like' function spaces $\W^L$ for fully connected ReLU networks with $L$ hidden layers and bounded path-norm are introduced. There it is shown that the unit ball $B^L$ in $\W^L$ satisfies the  Rademacher complexity estimate
\[
\mathrm{Rad}(B^L, S) \leq 2^{L+1}\,\sqrt{\frac{2\,\log(d+2)}n}
\]
on any sample set of $n$ elements inside $[-1,1]^d$. According to \cite[Lemma 26.2]{shalev2014understanding}, we have
\[
\E_{(x_1,\dots,x_n)\sim\P^n} \left[\sup_{\|h\|_{\W^L}\leq 1} \left|\int_Qh(x) \,\P(\d x) - \frac1n\sum_{i=1}^nh(x_i)\right|\right]\leq 2\,\E_{(x_1,\dots,x_n)\sim\P^n} \, \mathrm{Rad}\big(B,\{x_1,\dots,x_n\}\big).
\]
for any data distribution $\P$ on $Q$, so $\W^L$ can be chosen as $X$ in this section. A more detailed consideration of Rademacher complexities in the case of Barron functions (for different activation functions) can be found in Appendix \ref{appendix barron} in the proof of Lemma \ref{lemma uniform monte-carlo}. For ReLU activation, Barron space coincides with the tree-like function space $\W^1$ with one hidden layer.
\end{example}
}

\cor{
\begin{corollary}
Let $X= \W^L(Q)$ the tree-like function space, $Y= C^{0,1}(Q)$ the space of Lipschitz-continuous functions (with respect to the $\ell^\infty$-norm on $\R^d$) and $Z= L^2(Q)$. Then
\[
\rho(t) = w_{d_Z}(B^X_t, B^Y_1) \geq 2^{-\frac1d} \frac{\big(c_d/2\big)^\frac{d}{d-2}}{C_d\,\left(3\cdot2^{L+2}\sqrt{\frac{2\,\log(2d+2)}{n}}\right)^\frac2{d-2}}\,t^{-\frac{2}{d-2}} = \bar c_d\,2^{-\frac{L}{d-2}}\,t^{-\frac2{d-2}}
\]
for a dimension-dependent constant $\bar c_d>0$.
\end{corollary}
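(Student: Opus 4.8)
The plan is to deduce the inequality directly from Lemma~\ref{lemma slow sequence}, so the whole argument is a matter of matching its hypotheses to the construction already carried out in Lemma~\ref{lemma L2 approximation}. First I would set $Z = L^2(Q)$, $Y = C^{0,1}(Q)$, $X = \W^L(Q)$ and $W = \R$. The continuous embeddings $\W^L(Q)\embeds C^0(Q)\embeds L^2(Q)$ and $C^{0,1}(Q)\embeds L^2(Q)$ are immediate once one recalls that a bounded ball in $\W^L$ consists of uniformly bounded, uniformly Lipschitz functions on $Q$ (the path norm dominates the Lipschitz seminorm), and both embeddings are dense because finite ReLU networks, respectively smooth functions, are uniformly dense on $Q$ and hence $L^2$-dense. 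Arzelà--Ascoli further shows that the unit ball of $\W^L$ is precompact in $C^0(Q)$, so $\W^L(Q)\embeds L^2(Q)$ is a compact embedding; this is case~(1) of Remark~\ref{remark minimizer} and legitimizes the use of a minimizer $x_k$ in the proof of Lemma~\ref{lemma slow sequence} (uniform convexity of $L^2$ would serve equally well through case~(2)).

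Next I would build the operator sequence. By the preceding Example, $\W^L$ is an admissible choice for the space $X$ in Lemma~\ref{lemma L2 approximation}: the estimate $\mathrm{Rad}(B^L,S)\le 2^{L+1}\sqrt{2\log(d+2)/n}$ from \cite{deep_barron}, combined with \cite[Lemma~26.2]{shalev2014understanding}, furnishes the required uniform Monte-Carlo bound over $B^L$ with a constant of order $2^{L+2}\sqrt{\log(2d+2)}$. Feeding this into Lemma~\ref{lemma L2 approximation} produces, for each $n$, points $x_1^n,\dots,x_n^n\in Q$, and I set $A_n\phi = \frac1n\sum_{i=1}^n\fint_{B_{\eps_n}(x_i^n)}\phi\dx$ with $\eps_n = \gamma_d\,n^{-1/d}$ and $A\phi = \int_Q\phi\dx$. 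Each $A_n$ is bounded on $L^2(Q)$ (Cauchy--Schwarz on the ball $B_{\eps_n}$) and $A$ is bounded since $|Q|=1$, so $A_n,A\in L(Z,W)$. Replacing $\phi$ by $-\phi$ upgrades the three one-sided suprema of Lemma~\ref{lemma L2 approximation} to the operator-norm bounds $\|A_n-A\|_{L(X,W)}\le 3C_X n^{-1/2}$, $\|A_n-A\|_{L(Y,W)}\ge c_d\,n^{-1/d}$ and $\|A_n-A\|_{L(Z,W)}\le C_d$.

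Then I would simply invoke Lemma~\ref{lemma slow sequence} with $\alpha=\tfrac12$, $\beta=\tfrac1d$, $c_Y=c_d$, $C_Z=C_d$, and the relevant $C_X$-constant equal to $3\cdot 2^{L+2}\sqrt{2\log(2d+2)}$; the requirement $\beta<\alpha$ is exactly $d>2$, which is the range in which the statement is not vacuous. Substituting and using $\tfrac{\beta}{\alpha-\beta}=\tfrac2{d-2}$, $\tfrac{\alpha}{\alpha-\beta}=\tfrac d{d-2}$ and $2^{-\beta}=2^{-1/d}$ reproduces the first displayed inequality verbatim. For the closing equality I would isolate the $L$-dependence, which enters only through the power of $2$ inside $C_X$ (raised to $\tfrac2{d-2}$ in the denominator), and absorb all remaining $d$-dependent factors -- those coming from $c_d$, $C_d$, $\gamma_d$, $\log(2d+2)$ and numerical constants -- into $\bar c_d$. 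There is no genuine analytic obstacle here: every quantitative input already exists, so the hard part is purely bookkeeping -- verifying that $\W^L$ really meets the Monte-Carlo hypothesis of Lemma~\ref{lemma L2 approximation} (this is the content of the Example), checking the minimizer-existence condition of Remark~\ref{remark minimizer}, and keeping track of constants so that the exponent $-\tfrac2{d-2}$ and the stated $L$-scaling come out correctly.
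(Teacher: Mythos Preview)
Your proposal is correct and follows exactly the route the paper intends: the corollary is stated without proof precisely because it is the direct combination of Lemma~\ref{lemma L2 approximation} (to produce the operators $A_n,A$ and the three norm estimates), the preceding Example (to verify that $\W^L$ satisfies the Monte-Carlo hypothesis with $C_X$ of order $2^{L+2}\sqrt{\log(2d+2)}$), and Lemma~\ref{lemma slow sequence} with $\alpha=\tfrac12$, $\beta=\tfrac1d$. Your additional care in checking the compact embedding via Arzel\`a--Ascoli to justify Remark~\ref{remark minimizer} is appropriate and not spelled out in the paper.
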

In particular, slightly deeper neural networks do not possess drastically larger the approximation power compared in the class of Lipschitz functions. 
}


\section{Approximating Two-Layer Neural Networks by Kernel Methods}\label{section rkhs}

A brief review of reproducing kernel Hilbert spaces and our notation is given in Appendix \ref{appendix rkhs}. \cor{For bounded kernels}, the RKHS $\H_{k}$ embeds continuously into $L^2(\P)$. We assume additionally that $\H_{k}$ embeds into $L^2(\P)$ compactly. In Appendix \ref{appendix rkhs}, we show that this assumption is met for common random feature kernels and two-layer neural tangent kernels. Compactness allows us to apply the Courant-Hilbert Lemma, which is often used in the eigenvalue theory of elliptic operators.

\begin{lemma}\cite[Satz 8.39]{dobrowolski2010angewandte}
Let $H$ be a real, separable infinite-dimensional Hilbert space with two symmetric and continuous bilinear forms $B, K: H\times H\to \R$. Assume that
\begin{enumerate}
\item $K$ is continuous in the weak topology on $H$,
\item $K(u, u)>0$ for all $u\neq 0$, and
\item $B$ is coercive relative $K$, i.e.
\[
B(u,u) \geq c\,\|u\|_H^2 - \tilde c\,K(u,u)
\]
for constants $c, \tilde c>0$.
\end{enumerate}
Then the eigenvalue problem 
\[
B(u, u_i) = \lambda_i\,K(u,u_i)\quad\forall\ u\in H
\]
has countably many solutions $(\lambda_i, u_i)$. Every eigenvalue has finite multiplicity, and if sorted in ascending order then
\[
\lim_{i\to\infty}\lambda_i = +\infty.
\]
The space spanned by the eigenvectors $u_i$ is dense in $H$ and the eigenvectors satisfy the orthogonality relation
\[
K(u_i,u_j) = \delta_{ij}, \qquad  B(u_i,u_j) = \lambda_i\,K(u_i,u_j) = \lambda_i\delta_{ij}.
\]
We can expand the bilinear forms as
\[
B(u, v) = \sum_{i=1}^\infty \lambda_i\,K(u_i, u)\,K(u_i, v), \qquad K(u, v) =  \sum_{i=1}^\infty K(u_i, u)\,K(u_i, v).
\]
The pairs $(\lambda_i, u_i)$ are defined as solutions to the sequence of variational problems
\[
\lambda_i = B(u_i, u_i) = \inf\left\{\frac{B(u, u)}{K(u,u)}\:\bigg|\: K(u, u_j) = 0 \quad\forall\ 1\leq j\leq i-1\right\}.
\]
\end{lemma}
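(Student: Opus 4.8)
The plan is to reduce the generalized eigenvalue problem to the classical spectral theorem for compact self-adjoint operators after re-coordinatizing $H$. First I would pass to the shifted bilinear form $\tilde B(u,v) := B(u,v) + \tilde c\,K(u,v)$: it is symmetric and continuous, and the relative coercivity hypothesis gives $\tilde B(u,u) \ge c\,\|u\|_H^2$, so $\tilde B$ is an inner product on $H$ whose associated norm is equivalent to $\|\cdot\|_H$ --- in particular the two induce the same weak topology. Write $\langle\cdot,\cdot\rangle := \tilde B(\cdot,\cdot)$. Since $v\mapsto K(u,v)$ is $\langle\cdot,\cdot\rangle$-bounded for each fixed $u$, the Riesz representation theorem yields a bounded linear operator $T\colon H\to H$ with $K(u,v)=\langle Tu,v\rangle$ for all $u,v$. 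Symmetry of $K$ makes $T$ self-adjoint for $\langle\cdot,\cdot\rangle$, and positivity of $K$ (i.e.\ $K(u,u)=\langle Tu,u\rangle>0$ for $u\ne 0$) makes $T$ positive definite, hence injective.

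The step I expect to be the crux is showing that $T$ is compact, and this is where the weak continuity of $K$ enters. If $u_n\rightharpoonup 0$ then $(u_n)$, hence $(Tu_n)$, is bounded; any weak limit point $g$ of $(Tu_n)$ satisfies $\langle g,v\rangle=\lim_n K(u_n,v)=0$ for every $v$ by weak continuity of $K$ in its first argument, so $Tu_n\rightharpoonup 0$; and then $\|Tu_n\|_{\tilde B}^2=K(u_n,Tu_n)\to 0$ by (joint) weak continuity of $K$ applied to $u_n\rightharpoonup 0$ and $Tu_n\rightharpoonup 0$. Thus $T$ maps weakly convergent sequences to norm-convergent ones, i.e.\ $T$ is compact. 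The spectral theorem for positive compact self-adjoint operators then provides a $\langle\cdot,\cdot\rangle$-orthonormal system $(e_i)_{i\in\N}$ of eigenvectors, $Te_i=\mu_i e_i$ with $\mu_i>0$, each of finite multiplicity, $\mu_i\to 0$; since $\ker T=\{0\}$ by injectivity, $(e_i)$ is actually an orthonormal basis of $H$, so $\mathrm{span}\{e_i\}$ is dense.

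It remains to unwind the change of coordinates. Set $\lambda_i:=\mu_i^{-1}-\tilde c$ and $u_i:=\mu_i^{-1/2}e_i$; listing the $\mu_i$ in decreasing order makes the $\lambda_i$ increasing with $\lambda_i\to+\infty$, of finite multiplicity. From $K(u,u_i)=\langle Tu,u_i\rangle=\mu_i^{1/2}\langle u,e_i\rangle$ one gets $B(u,u_i)=\langle u,u_i\rangle-\tilde c\,K(u,u_i)=(\mu_i^{-1}-\tilde c)K(u,u_i)=\lambda_i K(u,u_i)$, which is the asserted eigenvalue equation; taking $u=u_j$ and using $\langle e_i,e_j\rangle=\delta_{ij}$ gives $K(u_i,u_j)=\delta_{ij}$ and $B(u_i,u_j)=\lambda_i\delta_{ij}$. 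Expanding a general $u$ in the basis $(e_i)$, the eigen-expansion $K(u,v)=\langle Tu,v\rangle=\sum_i\mu_i\langle u,e_i\rangle\langle v,e_i\rangle$ together with Parseval's identity $\langle u,v\rangle=\sum_i\langle u,e_i\rangle\langle v,e_i\rangle$ and the substitution $\langle u,e_i\rangle=\mu_i^{-1/2}K(u,u_i)$ give the stated series $K(u,v)=\sum_i K(u_i,u)K(u_i,v)$ and $B(u,v)=\langle u,v\rangle-\tilde c\,K(u,v)=\sum_i\lambda_i K(u_i,u)K(u_i,v)$ (absolutely convergent since $\lambda_i\mu_i=1-\tilde c\mu_i$ is bounded). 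Finally $K(u,u_j)=\mu_j^{1/2}\langle u,e_j\rangle$ shows that the constraint $K(u,u_j)=0$ for $1\le j\le i-1$ is exactly $u\perp\mathrm{span}\{e_1,\dots,e_{i-1}\}$; on that subspace $K(u,u)=\sum_{k\ge i}\mu_k|\langle u,e_k\rangle|^2\le\mu_i\langle u,u\rangle$, so $\langle u,u\rangle/K(u,u)\ge\mu_i^{-1}$ with equality at $u=e_i$, and since $B(u,u)/K(u,u)=\langle u,u\rangle/K(u,u)-\tilde c$ this yields the variational characterization $\lambda_i=\inf\{B(u,u)/K(u,u):K(u,u_j)=0,\ 1\le j\le i-1\}$, attained at $u_i$. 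Everything past the compactness step is bookkeeping on top of the spectral theorem.
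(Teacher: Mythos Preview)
The paper does not prove this lemma; it is quoted verbatim from \cite[Satz 8.39]{dobrowolski2010angewandte} and used as a black box, so there is no ``paper's own proof'' to compare against. Your argument is correct and is in fact the standard route to results of this Courant--Hilbert type: shift $B$ by $\tilde c K$ to obtain an equivalent inner product, represent $K$ by a self-adjoint operator $T$ in that inner product, use weak continuity of $K$ to force $T$ compact, and read everything off the spectral theorem. The compactness step and the unwinding are both carried out cleanly; the only comment is that ``$K$ is continuous in the weak topology'' should indeed be read as joint (sequential) weak continuity, which you use when passing from $u_n\rightharpoonup 0$, $Tu_n\rightharpoonup 0$ to $K(u_n,Tu_n)\to 0$.
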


Consider $H= L^2(\P)$ and
\[
B(u,v) = \langle u,v\rangle_{L^2(\P)}, \qquad K(u,v) = \int_{\R^d\times\R^d}u(x)\,v(x')\,k(x,x')\,\P(\d x) \,\P(\d x') = \langle \overline Ku, v \rangle _{L^2(\P)}
\]
where 
\[
\ol Ku(x) = \int_{\R^d}k(x,x')\,u(x')\,\P(\d x').
\]
It is easy to see that the assumptions of the Courant-Hilbert Lemma are indeed satisfied. In particular, note that
\[
K(u, u_i) = \mu_i\,B(u,u_i)\quad\forall\ u \in L^2(\P) \qquad\LRa\qquad \ol Ku_i = \mu_i \,u_i.
\]
By definition, all eigenfunctions lie in the reproducing kernel Hilbert space.

Let $X = \H_{k}$ be a suitable RKHS, $Y = \mathcal B(\P)$ Barron space (see Appendix \ref{appendix barron} for a brief review), $Z = W = L^2(\P)$. Then $X, Y \embeds Z$. Furthermore, consider the sequence of $n$-dimensional spaces
\[
X_n = \mathrm{span}\{u_1,\dots, u_n\}\subseteq X
\]
spanned by the first $n$ eigenfunctions of $\ol K$ and the maps
\[
A_n: Z\to W, \qquad A_n = P_{X_n}, \qquad A = \id_{L^2(\P)}
\]
where $P_{V}$ denotes the $Z$-orthogonal projection onto the subspace $V$. Due to the orthogonality statement in the Courant-Hilbert Lemma, $P_V$ is also the $X$-orthogonal projection for this specific sequence of spaces.

\begin{lemma}\label{lemma slowness estimates}
If $\P = \L^d|_{[0,1]^d}$, we have the estimates
\[
\|A_n - A\|_{L(X,W)} \leq \frac1{\lambda_{n+1}^{1/2}}, \qquad \|A_n - A\|_{L(Y,W)} \geq \frac{c}d\,n^{-1/d}, \qquad \|A_n-A\|_{L(Z,W)} \leq 1
\]
where $c>0$ is a universal constant.
\end{lemma}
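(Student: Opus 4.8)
The plan is to establish the three estimates separately, using the spectral structure of $\ol K$ for the $X$- and $Z$-bounds and a direct test-function argument for the $Y$-bound. For the operator norm on $Z = L^2(\P)$: since $A_n - A = P_{X_n} - \id = -(\id - P_{X_n})$ is (minus) the orthogonal projection onto $X_n^\perp$, its operator norm on $L^2(\P)$ is at most $1$, giving the third estimate immediately. For the operator norm on $X = \H_k$: here I would use that the $X$-norm and the $Z$-norm are simultaneously diagonalized by the eigenbasis $(u_i)$. Concretely, if $u = \sum_i a_i\,u_i \in \H_k$ then, using the Courant--Hilbert expansion $K(u,v) = \sum_i K(u_i,u)K(u_i,v)$ and the fact that the RKHS inner product corresponds to the form with eigenvalues $1/\mu_i$ (equivalently, the $\lambda_i$ of the Courant--Hilbert setup), one gets $\|u\|_Z^2 = \sum_i \mu_i\,a_i^2$ while $\|u\|_X^2 = \sum_i a_i^2$ up to the identification of $\mu_i$ with $\lambda_i^{-1}$. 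Then
\[
\|(\id - P_{X_n})u\|_Z^2 = \sum_{i > n} \mu_i\,a_i^2 \leq \mu_{n+1}\sum_{i>n} a_i^2 \leq \mu_{n+1}\,\|u\|_X^2,
\]
and since $\mu_{n+1} = \lambda_{n+1}^{-1}$ this yields $\|A_n - A\|_{L(X,W)} \leq \lambda_{n+1}^{-1/2}$. The one point requiring care is pinning down precisely how the RKHS norm relates to the Courant--Hilbert eigenvalues $\lambda_i$ versus the integral-operator eigenvalues $\mu_i$; I expect the relation $\lambda_i = 1/\mu_i$ (or the reciprocal) to fall out of the identity $K(u,u_i) = \mu_i B(u,u_i)$ stated just before the lemma, combined with the fact that eigenfunctions of $\ol K$ normalized in $L^2$ have RKHS norm $\mu_i^{-1/2}$.

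For the lower bound on the $Y$-norm, the idea is to exhibit, for each $n$, a Barron function $\phi_n \in B^Y$ that the projection $P_{X_n}$ fails to capture, quantitatively. Since $X_n$ is $n$-dimensional, a dimension-counting / volume argument should produce a "bump"-type Lipschitz function supported on a cube of side $\sim n^{-1/d}$ that is $L^2$-far from any $n$-dimensional subspace: partition $Q=[0,1]^d$ into $\sim 2n$ congruent subcubes, and among the corresponding bump functions (each normalized to lie in $B^Y$ after rescaling, which forces height $\sim n^{-1/d}$ because of the Lipschitz constraint) find a linear combination orthogonal to all of $X_n$, of controlled $L^2$-norm. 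Its $L^2$-norm is $\gtrsim n^{-1/d}$ by the height scaling, and $(\id - P_{X_n})$ leaves it unchanged, while its $Y$-norm is $O(1)$; this delivers $\|A_n - A\|_{L(Y,W)} \geq (c/d)\,n^{-1/d}$. One must check that such bump functions actually lie in Barron space with controlled norm — this is standard, since a single Lipschitz bump on a cube is a finitely-supported (indeed smooth after mollification) function and hence Barron, and the $\P$-measure being Lebesgue keeps all constants explicit; the factor $1/d$ presumably tracks the Lipschitz-to-Barron norm conversion or the diameter of the cube in the relevant norm.

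The main obstacle I anticipate is the $Y$-estimate: it is the only genuinely constructive step, and one has to simultaneously control three things — that the test function lies in the unit ball of Barron space, that it is orthogonal to (or at least has large component off) the unknown subspace $X_n$, and that its $L^2$-norm is bounded below by a constant times $n^{-1/d}$. The orthogonality is handled by linear algebra (an $n$-dimensional subspace cannot meet a space of $\sim 2n$ bump functions trivially), and the norm scaling is forced by the Lipschitz bound, so the real bookkeeping is making all the constants dimension-explicit and confirming the $n^{-1/d}$ rate rather than something worse. By contrast, the $X$- and $Z$-estimates are essentially automatic once the spectral picture from the Courant--Hilbert Lemma is in place.
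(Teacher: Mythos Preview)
Your $X$- and $Z$-estimates match the paper's proof exactly. The paper normalizes the eigenfunctions to be $L^2(\P)$-orthonormal, so that $\|f\|_{L^2}^2 = \sum_i a_i^2$ and $\|f\|_X^2 = \sum_i \lambda_i\,a_i^2$; this resolves the $\lambda_i$ versus $\mu_i$ bookkeeping you flagged and yields the one-line computation you sketched.

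For the $Y$-estimate the paper does not construct anything: it simply cites \cite[Theorem~6]{barron1993universal}, which asserts that \emph{any} $n$-dimensional linear subspace of $L^2([0,1]^d)$ is at $L^2$-distance $\geq (c/d)\,n^{-1/d}$ from some function in the Barron unit ball. Barron's own proof uses the sinusoidal test functions $f_\omega(x) = |\omega|^{-1}\cos(\omega^T x)$ for $\omega\in\Z^d\setminus\{0\}$, not bumps.

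Your bump-function construction has a genuine gap: it does not deliver the rate $n^{-1/d}$. A bump supported on a cube of side $\eps\sim n^{-1/d}$, normalized to Barron norm $O(1)$, indeed has height $\sim\eps$ (the Barron/Fourier seminorm is invariant under the scaling $\psi\mapsto \eps\,\psi(\cdot/\eps)$). But its $L^2$-norm also picks up the volume of the support:
\[
\big\|\eps\,\psi(\cdot/\eps)\big\|_{L^2(Q)} \;=\; \eps\cdot\eps^{d/2}\,\|\psi\|_{L^2} \;\sim\; n^{-\frac12-\frac1d},
\]
not $n^{-1/d}$ as you claimed ``by the height scaling''. The pigeonhole/trace step (among $2n$ orthogonal bumps, some has at least half its $L^2$-mass off $X_n$) is correct, but it only yields $\|A_n-A\|_{L(Y,W)}\gtrsim n^{-1/2-1/d}$. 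Taking linear combinations does not help: controlling the Barron norm forces $\sum_j|c_j|=O(1)$, which can only shrink the $L^2$-norm further. Sinusoids avoid this loss because they have full support: $|\omega|^{-1}\cos(\omega^T x)$ has Barron norm $O(1)$ and $L^2$-norm $\sim|\omega|^{-1}$, and there are $\gtrsim 2n$ mutually $L^2$-orthogonal such functions with $|\omega|\lesssim \sqrt d\,n^{1/d}$ --- exactly what the trace argument needs to produce $n^{-1/d}$.
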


\begin{proof}[Proof of Lemma \ref{lemma slowness estimates}]
{\bf $X$-estimate.} Normalize the eigenfunctions $f_i$ of $k$ to be $L^2(\P)$-orthonormal. For $f\in \H_{k_\pi}$ we have the expansion $f = \sum_{i=1}^\infty a_i\,f_i$ and thus $(A-A_n)f = \sum_{i=n+1}^\infty a_i\,f_i$ such that
\begin{align*}
\|(A_n-A)f\|_W^2 = \sum_{i=n+1}^\infty|a_i|^2
	\leq \sum_{i=n+1}^\infty \frac{\lambda_i}{\lambda_{n+1}}\,|a_i|^2 = \frac{\|A_nf\|_X^2}{\lambda_{n+1}}
	\leq \frac{\|f\|_X}{\lambda_{n+1}} \leq \frac1{\lambda_{n+1}}
\end{align*}

{\bf $Y$-estimate.} See \cite[Theorem 6]{barron1993universal}. There it is shown that {\em any} sequence of $n$-dimensional spaces suffers from the curse of dimensionality when approximating a subset of Barron space. 

{\bf $Z$-estimate.} The orthogonal projection $A - A_n = P_{X_n^\bot}$ has norm one.
\end{proof}

Thus, if an RKHS has rapidly decreasing eigenvalues independently of the dimension of the ambient space (which is favourable from the perspective of statistical learning theory), then it suffers from a slow approximation property.  

\begin{example}\label{example rkhs slow}
In Appendix \ref{appendix rkhs}, we give examples of random feature kernels and neural tangent kernels for which $\lambda_k \leq c_d\,k^{-\frac12 + \frac3{\cor{2}d}}$. \cor{In Appendix \ref{appendix barron}, we briefly discuss Barron space.} Applying Lemma \ref{lemma slow sequence} with 
\[
\alpha = \frac14 - \frac3{\cor{4}d}, \qquad \beta = \frac1d\qquad \Ra\quad \frac{\beta}{\alpha-\beta} = \frac{\frac1d}{\frac{d}{4d} - \frac{\cor{3}}{4d} - \frac{4}{4d} } = \frac{4}{d-\cor{7}},
\]
we see that the $L^2$-width of the RKHS in Barron space is bounded from below by
\[
\rho(t) := \sup_{\|\phi\|_{\B(\P)} \leq 1} \inf_{\|\psi\|_{\H_{k,\P}}\leq t} \|\phi - \psi\|_{L^2(\P)}\:\geq c_d t^{-\frac{\beta}{\alpha-\beta}} = c_d\,t^{- \frac{4}{d-10}}.
\]
Due to Lemma \ref{lemma slow point}, there exists a function $\phi$ in Barron space such that
\[
\limsup_{t\to \infty} \left(t^\gamma\cdot \inf_{\|\psi\|_{\H_{k,\P}}\leq t} \|\phi - \psi\|_{L^2(\P)}\right) = \infty
\]
for all $\gamma> \frac{4}{d-\cor{7}}$.
\end{example}

\section{Discussion}\label{section discussion}

\cor{From the viewpoint of functional analysis and more precisely function spaces, a fundamental task in machine learning is balancing the approximation and estimation errors of a hypothesis class. In overly expressive function classes, it may be difficult to assess the performance of a function from a small data sample, whereas too restrictive function classes lack the expressivity to perform well in many problems. In this article, we made a first step in trying to quantify the competition between estimation and approximation. Our results show that linear function classes in which the estimation error is strongly controlled (including, but not limited to those developed for infinitely wide neural networks), the approximation error must suffer from the curse of dimensionality within the class of Lipschitz-functions. Additionally}, we emphasize that kernel methods (including some neural tangent kernels) are subject to the curse of dimensionality where adaptive methods like shallow neural networks are not. 

In a companion article \cite{dynamic_cod}, we show that the Barron norm and the RKHS norm increase at most linearly in time during gradient flow training in the mean field scaling regime. This means that the $L^2$-population risk of a shallow neural network or kernel function can only decay like $t^{-\alpha_d}$ for general Lipschitz or Barron target functions respectively, where $\alpha_d$ is close to zero in high dimension. It is therefore of crucial importance to understand the function spaces associated with neural network architectures under the natural path norms.

While the theory of function spaces for low-dimensional analysis (Sobolev, Besov, BV, BD, etc.) is well studied, the spaces for high-dimensional (but not infinite-dimensional) analysis is at its very beginning. To the best of our knowledge, the currently available models for neural networks consider infinitely wide two-layer \cor{or multi-layer} networks or infinitely deep networks with bounded width \cite{E:2019aa, deep_barron, weinan2019lei}. A different perspective on the approximation spaces of deep networks focussing on the number of parameters (but not their size) is developed in \cite{gribonval2019approximation}.

Even for existing function spaces, it is hard to check whether a given function belongs to the space. Barron's original work \cite{barron1993universal} shows (in modern terms) that every sufficiently smooth function on an extension domain belongs to Barron space, where the required degree of smoothness depends on the dimension. More precisely, if $f$ is a function on $\R^d$ such that its Fourier transform $\hat f$ satisfies
\[
C_f:= \int_{\R^d} \big|\hat f(\xi)\big|\,|\xi|\,\d\xi < \infty,
\]
then for every compact set $K\subset\R^d$ there exists a Barron function $g$ such that $f\equiv g$ on $\R^d$. In particular, if \cor{$f\in H^s(\R^d)$ where $s>\frac{d}2+1$, then $C_f<\infty$. In particular, if $f\in C^k(\Omega)$ for $k>\frac d2+1$ and $\Omega$ has smooth boundary, then by standard extension results we see that $f\in \B$.}
This holds for networks with {\em any} sigmoidal activation function or ReLU activation. The criterion is unsatisfying in two ways:

\begin{enumerate}
\item The function $f$ has to be defined on the whole space for Fourier-analytic considerations to apply. Given a function defined on a compact set, one has to find a good extension to the whole space.

\item The constant $C_f$ merely gives an upper bound on the Barron-norm of a two layer network. If $f\notin C^1$, then $C_f=+\infty$. If $f(x) = \sum_{i=1}^m a_i\,\mathrm{ReLU}(w_i^Tx+ b_i)$ is a two-layer neural network with finitely many nodes, then $f$ is only Lipschitz continuous and not $C^1$-smooth (unless it is linear). Thus the criterion misses many functions of practical importance.
\end{enumerate}

\subsection{Open Problems} 

Many questions in this field remain open.

\begin{enumerate}
\item The slow approximation property in $L^\infty$ is based purely on the slow convergence of empirical measures, while the $L^2$-construction also uses the translation-invariance of Lebesgue measure for convenience. Does a `curse of dimensionality' type phenomenon affect $L^2$-approximation when $\mathbb P$ has a density with respect to Lebesgue measure, or more generally is a regular measure concentrated on or close to a high-dimensional manifold in an even higher-dimensional ambient space?

\item We used Lipschitz functions for convenience, but we believe that a similar phenomenon holds for $C^k$ functions for any fixed $k$ which does not scale with dimension. To apply the same approach, we need to answer how quickly the $1$-Wasserstein-type distances
\[
\widetilde W_{C^k, \lambda}(\mu, \nu) = \sup\left\{ \int f\,\mu(\d x) - \int f\,\nu(\d x)\:\bigg|\: f\text{ is 1-Lipschitz, }|D^kf|_\infty \leq \lambda\right\}
\]
decay in expectation when $\nu= \mu_n$ is an empirical measure sampled iid from $\mu$. Other concepts of Wasserstein-type distance would lead to similar results.

\item To prove the curse of dimensionality phenomenon, we used a multi-scale construction with modifications on quickly diverging scales. The statement about the upper limit is only a `worst case curse of dimensionality' and describes slow convergence on an infinite set of vastly different scales. Replacing the upper limit in Theorem \ref{lemma slow point} by a lower limit would lead to a much stronger statement with more severe implications for applications.

\item Our proof of slow approximation was purely functional analytic and abstract. Is it possible to give a concrete example of a Lipschitz function which is poorly approximated by Barron functions of low norm in $L^2(\mathbb P)$ for a suitable data measure $\mathbb P$? 

More generally, is it possible to find general criteria to establish how well a given Lipschitz function (or even a given collection of data) can be approximated by a certain network architecture?

\item We proved that a Lipschitz function $\phi$ exists for which 
\[
\inf_{\|f\|_{\text{Barron}}\leq t} \|f-\phi\|_{L^2} \geq c_d\,t^{-1/d}
\]
on a suitable collection of scales $t_k$. Can we more generally characterize the class
\[
Y_\alpha := \left\{y\in Y\:\bigg|\: \limsup_{t\to\infty}\big[ t^\alpha \dist_{Z}(y, t\cdot B^X) \big] < \infty\right\}
\]
for $\alpha>0$, $X = $ Barron space, $Y=$ Lipschitz space and $Z = L^2$? This question arises naturally when considering training algorithms which increase the complexity-controlling norm only slowly. It is related, but not identical to considerations in the theory of real interpolation spaces.

\item The draw-back of the functional analytic approach of this article is that it does not encompass many common loss functionals such as logistic loss. Is it possible to show by different means that they are subject to similar problems in high dimension?

\item We give relevant examples of reproducing kernel Hilbert spaces in which the eigenvalues of the kernel decay at a dimension-independent rate (at leading order). To the best of our knowledge, a general perspective on the decay of eigenvalues of kernels in practical applications without strong symmetry assumptions is still missing.
\end{enumerate}

\appendix
\section{A Brief Review of Barron Space}\label{appendix barron}

For the convenience of the reader, we recall Barron space for two-layer neural networks as introduced by E, Ma and Wu \cite{weinan2019lei,E:2018ab}. We focus on the functional analytic properties of Barron space, for results with a focus on machine learning we refer the reader to the original sources. The same space is denoted as $\F_1$ in \cite{bach2017breaking}, but described from a different perspective. For functional analytic notions, we refer the reader to \cite{MR2759829}.

Let $\mathbb{P}$ be a probability measure on $\R^d$ and $\sigma$ a Lipschitz-continuous function such that either

\begin{enumerate}
\item $\sigma = \mathrm{ReLU}$ or
\item $\sigma$ is sigmoidal, i.e.\ $\lim_{z\to \pm \infty} \sigma(z) = \pm 1$ (or $0$ and $1$).
\end{enumerate}

Consider the class $\F_m$ of two-layer networks with $m$ neurons
\[
f_\Theta (x) = \frac1m \sum_{i=1}^m a_i \,\sigma\big(w_i^Tx+ b_i\big), \qquad\Theta = \{(a_i, w_i, b_i) \in \R^{d+2}\}_{i=1}^m.
\]
It is well-known that the closure of $\F = \bigcup_{m=1}^\infty\F_m$ in the uniform topology is the space of continuous functions, see e.g.\ \cite{cybenko1989approximation}. Barron space is a different closure of the same function class where the path-norm
\[
\|f_\Theta\|_{\text{path}} = \frac1m \sum_{i=1}^m |a_i| \,\big[|w_i|_{\ell^q} + |b_i|\big]
\] 
remains bounded. Here we assume that data space $\R^d$ is equipped with the $\ell^p$-norm and take the dual $\ell^q$-norm on $w$. The concept of path norm corresponds to ReLU activation, a slightly different path norm for bounded Lipschitz activation is discussed below. 

The same class is often discussed without the normalizing factor of $\frac1m$. With the factor, the following concept of infinitely wide two-layer networks emerges more naturally.

\begin{definition}
Let $\pi$ be a Radon probability measure on $\R^{d+2}$ with finite second moments, \cor{which we denote by $\pi \in \mathcal P_2(\R^{d+2})$}. We denote by
\[
f_\pi(x) = \int_{\R^{d+2}} a\,\sigma(w^Tx+b)\,\pi(\d a \otimes \d w\otimes \d b)
\]
the two-layer network associated to $\pi$.
\end{definition}

If $\sigma = \mathrm{ReLU}$, it is clear that $f_\pi$ is Lipschitz-continuous on $\R^d$ with Lipschitz-constant $\leq \|f_\pi\|_{\B(\P)}$, so $f_\pi$ lies in the space of (possibly unbounded) Lipschitz functions $\mathrm{Lip}(\R^d)$. If $\sigma$ is a bounded Lipschitz function, the integral converges in $C^0(\R^d)$ without assumptions on the moments of $\pi$. If the second moments of $\pi$ are bounded, $f_\pi$ is a Lipschitz function also in the case of bounded Lipschitz activation $\sigma$. 

For technical reasons, we will extend the definition to distributions $\pi$ for which only the {\em mixed second moments}
\[
\int_{\R^{d+2}}|a|\,\big[|w| + \psi(b)\big]\,\pi(\d a\otimes \d w \otimes \d b)
\]
are finite, where $\psi = |b|$ in the ReLU case and $\psi = 1$ otherwise. Now we introduce the associated function space.

\begin{definition}
We denote
\[
\|\cdot\|_{\B(\P)}:\mathrm{Lip}(\R^d)\to[0,\infty), \qquad \|f\|_{\B(\P)} = \inf_{\{\pi|f_\pi =f\:\P-\text{a.e.}\}} \int_{\R^{d+2}}|a|\,\big[|w|_{\ell^q}+|b|\big]\,\pi(\d a\otimes \d w \otimes \d b)
\]
if $\sigma = \mathrm{ReLU}$ and
\[
\|\cdot\|_{\B(\P)}:\mathrm{Lip}(\R^d)\to[0,\infty), \qquad \|f\|_{\B(\P)} = \inf_{\{\pi|f_\pi =f\:\P-\text{a.e.}\}} \int_{\R^{d+2}}|a|\,\big[|w|_{\ell^q}+1\big]\,\pi(\d a\otimes \d w \otimes \d b)
\]
otherwise. In either case, we denote 
\[
\B(\P) = \big\{f\in \mathrm{Lip}(\R^d) : \|f\|_{\B(\P)}<\infty\big\}.
\]
Here $\inf\emptyset = + \infty$.
\end{definition}

\begin{remark}\label{remark no optimal measure}
It depends on the activation function $\sigma$ whether or not the infimum in the definition of the norm is attained. If $\sigma =$ ReLU, this can be shown to be true by using homogeneity. Instead of a probability measure $\pi$ on the whole space, one can use a signed measure $\mu$ on the unit sphere to express a two layer network. The compactness theorem for Radon measures provides the existence of a measure minimizer, which can then be lifted to a probability measure (see below for similar arguments). On the other hand, if $\sigma$ is a classical sigmoidal function such that 
\[
\lim_{z\to -\infty} \sigma(z) = 0, \qquad \lim_{x\to \infty}\sigma(z) = 1, \qquad 0 < \sigma(z) < 1 \quad \forall\ z\in \R,
\]
then the function $f(z) \equiv 1$ has Barron norm $1$, but the infimum is not attained. This holds true for {\em any} data distribution $\P$.

{\em Proof.} 
\begin{enumerate}
\item For any $x\in \spt(\P)$ we have
\[
1 = f(x) = \int_{\R^{d+2}} a\,\sigma(w^Tx+b)\,\d\pi \leq \int_{\R^{d+2}}|a|\,\d\pi \leq \int_{\R^{d+2}}|a|\,\big[|w| + 1\big]\,\d\pi.
\]
Taking the infimum over all $\pi$, we find that $1\leq \|f\|_{\B(\P)}$. For any measure $\pi$, the inequality above is strict since $|\sigma|<1$, so there is no $\pi$ which attains equality.
\item We consider a family of measures 
\[
\pi_\lambda = \delta_{a= \sigma(\lambda)^{-1}}\,\delta_{w=0}\,\delta_{b = \lambda} \qquad \Ra\quad f_{\pi_\lambda} \equiv 1\quad\text{and } \int_{\R^{d+2}} |a|\big[|w|_{\ell^q}+1\big]\,\d\pi_\lambda = \frac1{\sigma(\lambda)} \to 1
\]
as $\lambda\to \infty$.
\end{enumerate}
Thus $\|f\|_{\B(\P)} = 1$, but there is no minimizing parameter distribution $\pi$.
\end{remark}

\begin{remark}
The space $\B(\P)$ does not depend on the measure $\P$, but only on the system of null sets for $\P$.
\end{remark}

We note that the space $\B(\P)$ is reasonably well-behaved from the point of view of functional analysis.

\begin{lemma}
$\B(\mathbb P)$ is a Banach space with norm $\|\cdot\|_{\B(\P)}$. If $\spt(\P)$ is compact, $\B(\P)$ embeds continuously into the space of Lipschitz functions $C^{0,1}(\spt\,\P)$.
\end{lemma}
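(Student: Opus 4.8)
The plan is to verify the norm axioms for $\|\cdot\|_{\B(\P)}$, then establish the continuous embedding $\B(\P)\embeds C^{0,1}(\spt\,\P)$ under compactness of $\spt\,\P$, and finally deduce completeness from the embedding. The one elementary tool used throughout is a rescaling observation: if $\pi$ is a parameter measure representing a network $f=f_\pi$, write $\mathrm{cost}(\pi)$ for the functional $\int_{\R^{d+2}}|a|\,[|w|_{\ell^q}+|b|]\,\d\pi$ (with $1$ in place of $|b|$ in the sigmoidal case) whose infimum over representing measures is $\|f\|_{\B(\P)}$; then for $s>0$ the push-forward $\pi_s$ of $\pi$ under $(a,w,b)\mapsto(a/s,w,b)$ is again a probability measure, with $\mathrm{cost}(\pi_s)=\tfrac1s\,\mathrm{cost}(\pi)$ and $f_{\pi_s}=\tfrac1s\,f_\pi$, so representing measures can be rescaled and recombined freely. \emph{Homogeneity:} for $\lambda\neq0$, pushing $\pi$ forward under $(a,w,b)\mapsto(\lambda a,w,b)$ represents $\lambda f$ and scales the cost by $|\lambda|$, giving $\|\lambda f\|_{\B(\P)}=|\lambda|\,\|f\|_{\B(\P)}$; the case $\lambda=0$ is trivial. \emph{Triangle inequality:} given $f,g\in\B(\P)$ and $\eps>0$, pick representing $\pi$ of $f$ and $\rho$ of $g$ with costs within $\eps$ of the respective norms; then $\tfrac12\pi_{1/2}+\tfrac12\rho_{1/2}$ is a probability measure representing $f+g$ whose cost equals the sum of the two costs, so $\|f+g\|_{\B(\P)}\le\|f\|_{\B(\P)}+\|g\|_{\B(\P)}+2\eps$, and $\eps\to0$. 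Definiteness is deferred to the embedding.

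For the embedding, fix $f\in\B(\P)$ and a representing $\pi$, and bound $f_\pi$ directly on $\spt\,\P$. In the ReLU case, $|\sigma(w^Tx+b)|\le|w|_{\ell^q}|x|_{\ell^p}+|b|$ and $|\sigma(w^Tx+b)-\sigma(w^Ty+b)|\le|w|_{\ell^q}|x-y|_{\ell^p}$ give, with $R:=\sup_{x\in\spt\,\P}|x|_{\ell^p}<\infty$, the bounds $\|f_\pi\|_{C^0(\spt\,\P)}\le(1+R)\,\mathrm{cost}(\pi)$ and $[f_\pi]_{\mathrm{Lip}}\le\mathrm{cost}(\pi)$, hence $\|f_\pi\|_{C^{0,1}(\spt\,\P)}\le(2+R)\,\mathrm{cost}(\pi)$; in the bounded sigmoidal case one uses $|\sigma|\le M$ and the Lipschitz constant of $\sigma$ instead, reaching a bound of the same shape now matching the version of the cost carrying the constant $1$. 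Taking the infimum over representing $\pi$ yields $\|f\|_{C^{0,1}(\spt\,\P)}\le C_{\spt\,\P}\,\|f\|_{\B(\P)}$. In particular $\|f\|_{\B(\P)}=0$ forces $f\equiv0$ on $\spt\,\P$; since $\B(\P)$ depends only on the null sets of $\P$, this upgrades the seminorm to a genuine norm on $\B(\P)$ (passing, if necessary, to the quotient by functions vanishing $\P$-a.e., equivalently to restrictions to $\spt\,\P$).

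For completeness, let $(f_n)$ be Cauchy in $\B(\P)$. By the embedding it is Cauchy in the Banach space $C^{0,1}(\spt\,\P)$, hence converges there to a Lipschitz function $f$. Pass to a subsequence with $\|f_{n_{k+1}}-f_{n_k}\|_{\B(\P)}\le2^{-k}$, choose representing measures $\pi_k$ of $g_k:=f_{n_{k+1}}-f_{n_k}$ with $\mathrm{cost}(\pi_k)\le2^{-k+1}$ and $\pi_0$ of $f_{n_1}$ with $\mathrm{cost}(\pi_0)\le\|f_{n_1}\|_{\B(\P)}+1$, set $c_k=2^{-k-1}$, and define
\[
\pi:=\sum_{k\ge0}c_k\,(\pi_k)_{c_k},
\]
the countable convex combination of the rescalings. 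Then $\pi$ is a probability measure with $\mathrm{cost}(\pi)=\sum_{k\ge0}\mathrm{cost}(\pi_k)<\infty$ (so it has finite mixed second moments), and by Tonelli — valid because the costs are summable — $f_\pi=\sum_{k\ge0}f_{\pi_k}=f_{n_1}+\sum_{k\ge1}g_k=\lim_k f_{n_k}=f$ on $\spt\,\P$. Hence $f\in\B(\P)$. Applying the same construction to $f-f_{n_K}=\sum_{k\ge K}g_k$ gives $\|f-f_{n_K}\|_{\B(\P)}\le\sum_{k\ge K}\mathrm{cost}(\pi_k)\to0$, so the subsequence converges in $\B(\P)$, and therefore so does the original Cauchy sequence.

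\emph{Main obstacle.} There is no conceptual difficulty; the only point requiring genuine care is the book-keeping around rescaling and recombining parameter measures — checking that the (countable) convex combinations of push-forwards are bona fide probability measures with finite mixed second moments, that they represent exactly the intended finite or infinite sums of networks, and that the interchange of the countable sum with the integral defining $f_\pi$ is legitimate, which is precisely where summability of the costs enters. (For ReLU one could alternatively invoke $1$-homogeneity to identify $\B(\P)$ with a quotient of the space of finite signed measures on a compact sphere and conclude via the standard fact that quotients of Banach spaces by closed subspaces are Banach, but the direct argument above covers the sigmoidal case as well.) The mild issue that $\|\cdot\|_{\B(\P)}$ is a priori only a seminorm on $\mathrm{Lip}(\R^d)$ is handled by restricting to $\spt\,\P$ as above.
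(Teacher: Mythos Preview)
Your treatment of the norm axioms and of the embedding into $C^{0,1}(\spt\,\P)$ is essentially identical to the paper's. The completeness argument, however, takes a genuinely different route. The paper introduces an auxiliary Banach space $V$ of signed Radon measures on $\R^{d+2}$ equipped with the weighted total-variation norm $\|\mu\|_V=\int|a|\,[|w|+|b|]\,\d|\mu|$ (with $1$ in place of $|b|$ in the sigmoidal case), checks that $V^0_\P=\{\mu: f_\mu=0\ \P\text{-a.e.}\}$ is closed via continuity of $\mu\mapsto f_\mu$, and identifies $\B(\P)$ isometrically with the quotient $V/V^0_\P$; completeness is then inherited from the standard fact that quotients of Banach spaces by closed subspaces are Banach. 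Your direct ``absolutely summable telescoping'' construction is more elementary and stays within probability measures throughout, whereas the paper's route yields the structural identification $\B(\P)\cong V/V^0_\P$ as a dividend. One small correction to your closing remark: the quotient approach is not ReLU-specific --- the paper carries it out with signed measures on all of $\R^{d+2}$ under the weighted norm above and explicitly notes that the same proof covers bounded Lipschitz activation.
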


\begin{proof}
{\bf Scalar multiplication.}
Let $f\in \B(\P)$. For $\lambda\in \R$ and $\pi \in \mathcal P_2(\R^{d+2})$, define the push-forward
\[
T_{\lambda\sharp}\pi \in \cor{\mathcal P}_2(\R^{d+2})\qquad\text{along } T_\lambda:\R^{d+2}\to\R^{d+2}, \quad T_\lambda (a,w,b) = (\lambda a, w,b).
\]
Then 
\[
f_{T_{\lambda\sharp}\pi} = \lambda \,f_\pi, \qquad \int_{\R^{d+2}} \,|a|\,\big[|w|+ 1\big]\,T_{\lambda\sharp}\pi(\d a \otimes \d w \otimes \d b) = |\lambda| \int_{\R^{d+2}} \,|a|\,\big[|w|+ 1\big]\,\pi(\d a \otimes \d w \otimes \d b)
\]
and similarly in the ReLU case. Thus scalar multiplication is well-defined in $\B(\mathbb P)$. Taking the infimum over $\pi$, we find that $\|\lambda f\|_{\B(\P)} = |\lambda|\,\|f\|_{\B(\P)}$.

{\bf Vector addition.}
Let $g,h \in \B(\P)$. Choose $\pi_g, \pi_h$ such that $g= f_{\pi_g}$ and $h = f_{\pi_h}$. Consider
\[
\pi = \frac12\big[ T_{2\sharp} \pi_g + T_{2\sharp} \pi_h\big]
\]
like above. Then $f_\pi = g+h$ and
\[
\int_{\R^{d+2}} \,|a|\,\big[|w|+ 1\big]\,\pi(\d a \otimes \d w \otimes \d b) = \int_{\R^{d+2}} \,|a|\,\big[|w|+ 1\big]\,\pi_g(\d a \otimes \d w \otimes \d b) + \int_{\R^{d+2}} \,|a|\,\big[|w|+ 1\big]\,\pi_h(\d a \otimes \d w \otimes \d b).
\]
Taking infima, we see that $\|g+h\|_{\B(\P)} \leq \|g\|_{\B(\P)} + \|h\|_{\B(\P)}$. The same holds in the ReLU case.

{\bf Positivity and embedding.}
Recall that the norm on the space of Lipschitz functions on a compact set $K$ is
\[
\|f\|_{C^{0,1}(K)} = \sup_{x\in K} |f(x)| + \sup_{x,y\in K,\: x\neq y} \frac{|f(x) - f(y)|}{|x-y|}.
\]
It is clear that $\|\cdot\|_{\B(\P)} \geq 0$. If $\sigma$ is a bounded Lipschitz function, then 
\[
\|f_\pi\|_{L^\infty(\spt\,\P)} \leq {\|f_\pi\|_{\B(\P)}}\,\cor{\|\sigma\|_{L^\infty(\R)}}
\]
like in Remark \ref{remark no optimal measure}. If $\sigma = \mathrm{ReLU}$, then 
\[
\sup_{x\in \R^d} \frac{|f_\pi(x)|}{1+ |x|} \leq \|f_\pi\|_{\B(\P)}.
\]
In either case
\[
|f_\pi(x) - f_\pi(y)| \leq [\sigma]_{\mathrm{Lip}}\,\|f_\pi\|_{\B(\P)}
\]
for all $x,y$ in $\spt(\P)$. In particular, $\|f\|_{\B(\P)} > 0$ whenever $f\neq 0$ in $\B(\P)$ and if $\spt(\P)$ is compact, $\B(\P)$ embeds into the space of Lipschitz functions on $\B(\P)$. If $\sigma$ is a bounded Lipschitz function, $\B(\P)$ embeds into the space of bounded Lipschitz functions also on unbounded sets.

{\bf Completeness.} Completeness is proved most easily by introducing a different representation for Barron functions. Consider the space
\[
V = \left\{\mu\:\bigg|\: \mu\text{ (signed) Radon measure on }\R^{d+2}\text{ s.t. }\int_{\R^{d+2}}|a|\,\big[|w|+|b|\big]\,|\mu|(\d a \otimes \d w\otimes \d b) < \infty\right\}
\]
where $|\mu|$ is the total variation measure of $\mu$. Equipped with the norm
\[
\|\mu\|_V = \int_{\R^{d+2}}|a|\,\big[|w|+|b|\big]\,|\mu|(\d a \otimes \d w\otimes \d b),
\]
$V$ is a Banach space when we quotient out measures supported on $\{|a|=0\}\cup \{|b| = |w| = 0\}$ or restrict ourselves to the subspace of measures such that $|\mu|(\{|a|=0\}) = |\mu| (\{|b|=|w|=0\}) = 0$. The only non-trivial question is whether $V$ is complete. By definition $\mu_n$ is a Cauchy sequence in $V$ if and only if $\nu_n := |a|\,[|w|+ |b|]\cdot\mu_n$ is a Cauchy sequence in the space of finite Radon measures. Since the space of finite Radon measures is complete, $\nu_n$ converges (strongly) to a measure $\nu$ which satisfies $\nu(\{a =0\}\cup\{(w,b) = 0\}) =0$. We then obtain $\mu:= |a|^{-1}\,[|w|+ |b|]^{-1}\cdot\nu$. For $\mu\in V$ we write
\[
f_\mu (x) = \int_{\R^{d+2}} a\,\sigma(w^Tx+b)\,\mu(\d a \otimes \d w \otimes \d b)
\]
and consider the subspace
\[
V^0_\P = \{\mu\in V\:|\: f_\mu = 0 \:\:\P-\text{almost everywhere}\}.
\]
Since the map
\[
V\mapsto C^{0,1}(\spt\,\P), \qquad \mu\mapsto f_\mu
\]
is continuous by the same argument as before, we find that $V^0_\P$ is a closed subspace of $V$. In particular, $V/V^0_\P$ is a Banach space. We claim that $\B(\P)$ is isometric to the quotient space $V/V^0_\P$ by the map $[\mu]\mapsto f_\mu$ where $\mu$ is any representative in the equivalence class $[\mu]$.

It is clear that any representative in the equivalence class induces the same function $f_\mu$ such that the map is well-defined. Consider the Hahn decomposition $\mu = \mu^+ - \mu^-$ of $\mu$ as the difference of non-negative Radon measures. Set
\[
m^\pm:= \|\mu^\pm\|, \qquad \pi = \frac12 \left[ \frac 1{m^+} T_{\cor{2}m^+\sharp}\mu^+ + \frac1{m^-}\,T_{-\cor{2}m^-\sharp}\mu^-\right].
\]
Then $\pi$ is a probability Radon measure such that $f_\pi = f_\mu$ and 
\[
\int_{\R^{d+2}}|a|\,\big[|w|+|b|\big]\,\pi(\d a \otimes \d w\otimes \d b)  = \int_{\R^{d+2}}|a|\,\big[|w|+|b|\big]\,|\mu|(\d a \otimes \d w\otimes \d b).
\]
In particular, $\|f_\mu\|_{\B(\P)} \leq \|\mu\|_V$. Taking the infimum of the right hand side, we conclude that $\|f_\mu\|_{\B(\P)} \leq \|[\mu]\|_{V/V^0_\P}$. The opposite inequality is trivial since every probability measure is in particular a signed Radon measure.

Thus $\B(\P)$ is isometric to a Banach space, hence a Banach space itself. We presented the argument in the context of ReLU activation, but the same proof holds for bounded Lipschitz activation.
\end{proof}

A few remarks are in order.

\begin{remark}\label{remark compact support}
The requirement that $\mathbb P$ have compact support can be relaxed when we consider the norm
\[
\|f\|_{C^{0,1}(\P)} = \sup_{x,y\in \spt\P,\: x\neq y} \frac{|f(x) - f(y)|}{|x-y|} + \|f\|_{L^1(\P)}
\]
on the space of Lipschitz functions. Since Lipschitz functions grow at most linearly, this is well-defined for all data distributions $\P$ with finite first moments.
\end{remark}

\begin{remark}
For general Lipschitz-activation $\sigma$ which is neither bounded nor ReLU, the Barron norm is defined as
\[
 \|f\|_{\B(\P)} = \inf_{\{\pi|f_\pi =f\:\P-\text{a.e.}\}} \int_{\R^{d+2}}|a|\,\big[|w|_{\ell^q}+|b| + 1\big]\,\pi(\d a\otimes \d w \otimes \d b).
\]
Similar results hold in this case.
\end{remark}

\begin{remark}
In general, $\B(\P)$ for ReLU activation is not separable. Consider $\P = \L^1|_{[0,1]}$ to be Lebesgue measure on the unit interval in one dimension. For $\alpha \in (0,1)$ set $f_\alpha(x) = \sigma(x-\alpha)$. Then for $\beta>\alpha$ we have
\[
1 =  \frac{\beta - \alpha}{\beta-\alpha} = \frac{(f_\alpha-f_\beta)(\beta) - (f_\alpha - f_\beta)(\alpha)}{\beta-\alpha} \leq [f_\beta-f_\alpha]_{\mathrm{Lip}} \leq \|f_\beta - f_\alpha\|_{\B(\P)}.
\]
Thus there exists an uncountable family of functions with distance $\geq 1$, meaning that $\B(\P)$ cannot be separable. 
\end{remark}

\begin{remark}
In general, $\B(\P)$ for ReLU activation is not reflexive. We consider $\P$ to be the uniform measure on $[0,1]$ and demonstrate that $\B(\P)$ is the space of functions whose first derivative is in $BV$ (i.e.\ whose second derivative is a Radon measure) on $[0,1]$. \cor{The space of Radon measures on $[0,1]$ is denoted by $\M[0,1]$ and equipped with the total variation norm.} 

Assume that $f$ is a Barron function on $[0,1]$. Then 
\begin{align*}
f(x) &= \int_{\R^3} a\,\sigma(wx + b)\,\pi(\d a \otimes \d w \otimes \d b)\\
	&= \int_{\{w\neq 0\}} a\,|w|\,\sigma\left(\frac{w}{|w|}x + \frac{b}{|w|}\right)\,\pi(\d a \otimes \d w \otimes \d b) + \int_{\{w=0\}}a\,\sigma(b)\pi(\d a \otimes \d w \otimes \d b)\\
	&= \int_{\R} \sigma\left(-x+\tilde b\right)\,\mu_1(\d \tilde b) + \int_{\R} \sigma\left(x + b\right)\,\mu_2(\d \tilde b) + \cor{\int_{\{w=0\}}a\,\sigma(b)\pi(\d a \otimes \d w \otimes \d b)}
\end{align*}
where 
\[
\mu_1 = T_\sharp \big(a\,|w|\cdot \pi\big), \qquad T:\{w<0\}\to \R, \quad T(a,w, b) = \frac{b}{|w|}.
\]
and similarly for $\mu_2$. The Barron norm is expressed as 
\[
\|f\|_{\B(\P)} = \inf_{\mu_1,\mu_2} \left[\int_{\R} 1 + |\tilde b|\,|\mu_1|(\d \tilde b) + \int_{\R} 1 + |\tilde b|\,|\mu_2|(\d \tilde b)\right] + \int_{\{w=0\}} |ab|\,\pi(\d a\otimes \d w \otimes \d b).
\]
Since $\sigma'' = \delta$, we can formally calculate that 
\[
f'' = \mu_1 + \mu_2
\]
This is easily made rigorous in the distributional sense. Since $[0,1]$ is bounded by $1$, we obtain in addition to the bounds on $f(0)$ and the Lipschitz constant of $f$ that
\[
\|f''\|_{\M[0,1]} \leq 2\,\|f\|_{\B(\P)}. 
\]
On the other hand, if $f$ has a second derivative, then
\begin{align*}
f(x) &= f(0) + f'(0)\,x + \int_0^x \int_0^t f''(\xi)\d\xi\,\d t\\
	&= f(0) + f'(0)\,x + \int_0^x (x-t)\,f''(t)\d t\\
	&= f(0)\,\sigma(1) + f'(0)\,\sigma(x) + \int_0^xf''(t)\,\sigma(x-t)\dt
\end{align*}
for all $x\in[0,1]$. This easily extends to measure valued derivatives and we conclude that
\[
\|f\|_{\B(\P)} \leq |f(0)| + |f'(0)| + \|f''\|_{\M[0,1]} .
\]
We can thus express $\B(\P) = BV([0,1]) \times \R\times\R$ with an equivalent norm
\[
\|f\|_{\B (\P)}' =  |f(0)| + |f'(0)| + \|f''\|_{\M[0,1]}.
\]
Thus $\B(\P)$ is not reflexive since $BV$ is not reflexive.
\end{remark}

Finally, we demonstrate that integration by empirical measures converges quickly on $\B(\P)$. Assume that $p=\infty$, $q=1$.

\begin{lemma}\label{lemma uniform monte-carlo}
The uniform Monte-Carlo estimate 
\[
\E_{X_i \sim \P} \left\{\sup_{\|\phi\|_{\B(\P)}\leq 1} \left[\frac1n\sum_{i=1}^n\phi(X_i) - \int_Q\phi(x)\,\P(\d x)\right]\right\} \leq  2L\,\sqrt{\frac{2\,\log(2d)}n}
\]
holds for any probability distribution $\P$ such that $\spt(\P)\subseteq[-1,1]^d$. Here $L$ is the Lipschitz-constant of $\sigma$.
\end{lemma}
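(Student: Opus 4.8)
The plan is to run the standard symmetrization--contraction scheme for Rademacher complexities, using the description of the unit ball of $\B(\P)$ as a convex hull of single-neuron functions. \emph{First}, I would apply the symmetrization inequality (in the form of \cite[Lemma 26.2]{shalev2014understanding}, already used above) to bound the left-hand side by $2\,\E_S\,\mathrm{Rad}\big(B^{\B(\P)},S\big)$, the expected empirical Rademacher complexity of the unit ball over an iid sample $S=(X_1,\dots,X_n)\sim\P^n$. This produces the factor $2$ in the statement, so it remains to prove $\E_S\,\mathrm{Rad}(B^{\B(\P)},S)\le L\,\sqrt{2\log(2d)/n}$.

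\emph{Second}, I would reduce to a single neuron. By the very definition of $\|\cdot\|_{\B(\P)}$, a function $\phi$ with $\|\phi\|_{\B(\P)}\le1$ is a $\P$-a.e.\ limit of convex combinations of functions $x\mapsto a\,\sigma(w^Tx+b)$ subject to the path-norm constraint $|a|\big(|w|_{\ell^1}+|b|\big)\le1$ in the ReLU case (and $|a|\big(|w|_{\ell^1}+1\big)\le1$ for bounded $\sigma$), together with their negatives. Since the empirical Rademacher complexity is unchanged under passing to the closed convex symmetric hull, it suffices to bound $\mathrm{Rad}(\G,S)$ for $\G$ the class of such single neurons.

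\emph{Third}, I would peel off the activation $\sigma$ and pass to a linear class. For ReLU this is immediate from positive $1$-homogeneity: one folds $a$ into $(w,b)$, writes the neuron as $\pm\,\sigma\big(v^T(x,1)\big)$ with $\|v\|_{\ell^1}\le1$, notes that $|v^T(x,1)|\le1$ on $[-1,1]^d$, and so exhibits $\G$ as $1$-Lipschitz images of the linear class $\{x\mapsto v^T(x,1):\|v\|_{\ell^1}\le1\}$; the Ledoux--Talagrand contraction principle (which costs a factor $L$, and for which the additive constant $\sigma(0)$ is harmless since $\E\,\xi_i=0$) then covers a general $L$-Lipschitz $\sigma$ as well. \emph{Finally}, the Rademacher complexity of linear functionals over the $\ell^1$-ball on the $\ell^\infty$-bounded data $(X_i,1)$ equals $\tfrac1n\,\E_\xi\big\|\sum_{i=1}^n\xi_i(X_i,1)\big\|_{\ell^\infty}$; each signed coordinate sum is a sum of independent mean-zero increments bounded by $1$, hence sub-Gaussian with variance proxy $n$, and a Massart-type maximal inequality over these coordinates yields $\sqrt{2n\log(2d)}$ (up to the precise argument of the logarithm), i.e.\ the claimed $\sqrt{2\log(2d)/n}$ after division by $n$.

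The one step requiring genuine care is the third: handling the bias and the non-homogeneity of a general sigmoidal activation. For ReLU everything is clean, since homogeneity both absorbs $a$ and forces the pre-activation into $[-1,1]$; for a bounded Lipschitz $\sigma$ the bias is unconstrained, so one must invoke the version of the contraction inequality that does not require the contracting maps to vanish at the origin, and verify that the residual linear class is still the $\ell^1$-ball of $\R^{d+1}$ applied to $(X_i,1)$. This bookkeeping is where I expect the argument to demand the most care; the remaining steps are routine and parallel the Rademacher estimates in \cite{weinan2019lei}.
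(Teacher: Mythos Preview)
Your proposal follows the paper's proof step for step: symmetrization via \cite[Lemma 26.2]{shalev2014understanding}, reduction to the single-neuron extreme points of the Barron unit ball, contraction (\cite[Lemma 26.9]{shalev2014understanding}) to peel off $\sigma$ at cost $L$, and the Massart $\ell^1/\ell^\infty$ bound for the residual linear class. The only difference is cosmetic---you fold the bias into an augmented input $(x,1)\in\R^{d+1}$, yielding $\log(2(d+1))$, whereas the paper drops the bias term separately via \cite[Lemma 26.6]{shalev2014understanding} to reach $\log(2d)$---and your flagged caution about the unconstrained bias in the sigmoidal case is exactly the place where the paper, too, proceeds somewhat informally.
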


\begin{proof}
A single data point may underestimate or overestimate the average integral, and the proof of convergence relies on these cancellations. A convenient tool to formalize cancellation and decouple this randomness from other effects is through Rademacher complexity \cite[Chapter 26]{shalev2014understanding}.

We denote $S= \{X_1,\dots, X_n\}$ and assume that $S$ is drawn iid from the distribution $\P$. We consider an auxiliary random vector $\xi$ such that the entries $\xi_i$ are iid (and independent of $S$) variables which take the values $\pm 1$ with probability $1/2$. Furthermore, abbreviate by $B$ the unit ball in $\B(\P)$. Furthermore
\begin{align*}
\mathrm{Rep}(B,S) &= \sup_{\phi \in B} \left[\frac1n\sum_{i=1}^n\phi(X_i) - \int_Q\phi(x)\,\P(\d x)\right]\\
\mathrm{Rad} (B,S)&= \E_\xi \sup_{\phi \in B} \left[\frac1n \sum_{i=1}^n \xi_i\,\phi(X_i)\right].
\end{align*}

According to \cite[Lemma 26.2]{shalev2014understanding}, the Rademacher complexity $\mathrm{Rad}$ bounds the representativeness of the set $S$ by
\[
\E_S \mathrm{Rep}(B,S) \leq 2\,\E_S \cor{\mathrm{Rad}}(B,S).
\]
The unit ball in Barron space is given by convex combinations of functions 
\[
\phi_{w,b} (x) = \pm \frac{\sigma(w^T x+ b)}{|w|+1} \quad\text{or} \quad \pm \frac{\sigma(w^T x+ b)}{|w|+|b|}
\]
respectively, so for fixed $\xi$, the linear map $\phi\mapsto  \frac1n \sum_{i=1}^n \xi_i\,\phi(X_i)$ at one of the functions in the convex hull, i.e.
\[
\mathrm{Rad} (B,S)= \E_\xi \sup_{(w,b)} \left[\frac1n \sum_{i=1}^n \xi_i\,\frac{\sigma(w^T X_i+ b)}{|w|+1}\right].
\]
According to the Contraction Lemma \cite[Lemma 26.9]{shalev2014understanding}, the Lipschitz-nonlinearity $\sigma$ can be neglected in the computation of the complexity. If $L$ is the Lipschitz-constant of $\sigma$, then
\begin{align*}
 \E_\xi \sup_{(w,b)} \left[\frac1n \sum_{i=1}^n \xi_i\,\frac{\sigma(w^T X_i+ b)}{|w|+1}\right] 
 	&\leq L \, \E_\xi \sup_{(w,b)} \left[\frac1n \sum_{i=1}^n \xi_i\,\frac{w^T X_i+ b}{|w|+1}\right]\\
	&= L\, \E_\xi \sup_{w\in \R^d} \frac{w^T}{|w|+1} \frac1n \sum_{i=1}^n \xi_iX_i\\
	&= L\, \E_\xi \sup_{|w|\leq 1} w^T \frac1n \sum_{i=1}^n \xi_iX_i\\
	&\leq L\,\sup_i|X_i|_\infty\,\sqrt{\frac{2\,\log(2d)}n}
\end{align*}
where we used \cite[Lemma 26.11]{shalev2014understanding} for the complexity bound of the linear function class and \cite[Lemma 26.6]{shalev2014understanding} to eliminate the scalar translation. A similar computation can be done in the ReLU case.
\end{proof}

\section{A Brief Review of Reproducing Kernel Hilbert Spaces, Random Feature Models, and the Neural Tangent Kernel}\label{appendix rkhs}

For an introduction to kernel methods in machine learning, see e.g.\ \cite[Chapter 16]{shalev2014understanding} or \cite{cho2009kernel} in the context of deep learning. Let $k:\R^d\times \R \to \R$ be a symmetric positive definite kernel. The {\em reproducing kernel Hilbert space} (RKHS) $\H_k$ associated with $k$ is the completion of the collection of functions of the form
\[
h(x) = \sum_{i=1}^n a_i\,k(x,x_i)
\]
under the scalar product $\langle k(x,x_i), k(x, x_j)\rangle_{\H_{k}} = k(x_i, x_j)$. Note that, due to the positive definiteness of the kernel, the representation of a function is unique.

\subsection{Random Feature Models}

The random feature models considered in this article are functions of the same form
\[
f(x) = \frac1m \sum_{i=1}^m a_i\,\sigma\big(w_i^Tx + b_i\big)
\]
as a two-layer neural network. Unlike neural networks, $(w_i, b_i)$ are not trainable variables and remain fixed after initialization. Random feature models are linear (so easier to optimize) but less expressive than shallow neural networks. While two-layer neural networks of infinite width are modelled as
\[
f(x) = \int_{\R^{d+2}} a\,\sigma(w^Tx+b)\,\pi(\d a\otimes \d w \otimes \d b)
\]
for variable probability measures $\pi$, an infinitely wide random feature model is given as
\[
f(x) = \int_{\R^{d+1}} a(w,b)\, \sigma(w^Tx+b)\,\pi^0(\d w\otimes \d b)
\]
for a fixed distribution $\pi^0$ on $\R^{d+1}$. One can think of Barron space as the union over all random feature spaces. It is well known that neural networks can represent the same function in different ways. For ReLU activation, there is a degree of degeneracy due to the identity
\[
0 = x+\alpha - (x+\alpha) = \sigma(x) - \sigma(-x) + \sigma(\alpha)- \sigma(-\alpha) - \sigma(x+\alpha) + \sigma(-x-\alpha). 
\]
This can be generalized to higher dimension and integrated in $\alpha$ to show that the random feature representation of functions where $\pi^0$ is the uniform distribution on the sphere has a similar degeneracy. For given $\pi^0$, denote
\[
N = \left\{a \in L^2(\cor{\pi^0})\:\bigg|\: \int_{\R^{d+1}} a(w,b)\, \sigma(w^Tx+b)\,\pi^0(\d w\otimes \d b) = 0 \quad\P-\text{a.e.}\right\}
\]

\begin{lemma}\cite[Proposition 4.1]{rahimi2008uniform}
The space of random feature models is dense in the RKHS for the kernel
\[
k(x,x') = \int_{\R^{d+1}} \sigma(w^Tx+b)\,\sigma(w^Tx' + b)\,\pi^0(\d w\otimes \d b)
\]
and if
\[
f(x) = \int_{\R^{d+1}} a(w,b) \sigma(w^Tx+b)\,\pi^0(\d w\otimes \d b)
\]
for $a\in L^2(\pi^0)/N$, then
\[
\|f\|_{\H_k} = \|a\|_{L^2(\pi^0)/N}.
\]
\end{lemma}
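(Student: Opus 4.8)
The plan is to recognize $k$ as the kernel generated by an explicit feature map into $L^2(\pi^0)$ and then invoke the standard correspondence between feature maps and reproducing kernel Hilbert spaces. Define $\Phi\colon\R^d\to L^2(\pi^0)$ by $\Phi(x) = \big[(w,b)\mapsto\sigma(w^Tx+b)\big]$. Under the standing integrability hypotheses on $\pi^0$ — which are exactly what makes $k(x,x)<\infty$, hence automatic for bounded $\sigma$ and guaranteed by finite second moments of $\pi^0$ for $\sigma=\mathrm{ReLU}$ — the map $\Phi$ is well-defined, and since $\sigma$ is $L$-Lipschitz the estimate $\|\Phi(x)-\Phi(x')\|_{L^2(\pi^0)}^2 \leq L^2\,|x-x'|^2\int|w|^2\,\pi^0(\d w\otimes\d b)$ shows that $\Phi$ is continuous. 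By construction $k(x,x') = \langle\Phi(x),\Phi(x')\rangle_{L^2(\pi^0)}$, and the synthesis map $T\colon L^2(\pi^0)\to C(\R^d)$, $(Ta)(x) = \langle a,\Phi(x)\rangle_{L^2(\pi^0)} = f_a(x)$, is linear and well-defined, with $|f_a(x)|\leq\|a\|_{L^2(\pi^0)}\,k(x,x)^{1/2}$.

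Next I would identify the kernel of $T$, read as an operator into functions on $\spt\P$, with $N$. Because each $f_a$ is continuous, $f_a$ vanishes $\P$-almost everywhere if and only if it vanishes at every point of $\spt\P$; hence $N = \{a\in L^2(\pi^0) : \langle a,\Phi(x)\rangle = 0\ \forall x\in\spt\P\} = \overline{\mathrm{span}}\{\Phi(x):x\in\spt\P\}^\perp$ is a closed subspace, and $T$ restricts to a linear isomorphism of $N^\perp$ onto $\mathcal R:=\{f_a:a\in L^2(\pi^0)\}$. Transporting the Hilbert structure of $N^\perp$ across this isomorphism equips $\mathcal R$ with the norm $\|f_a\|_{\mathcal R} = \|P_{N^\perp}a\|_{L^2(\pi^0)} = \inf\{\|b\|_{L^2(\pi^0)} : f_b = f_a\} = \|a\|_{L^2(\pi^0)/N}$ (the three quantities agree because the set of preimages of $f_a$ is the affine subspace $a+N$); in particular $\mathcal R$ is complete, being isometric to the Hilbert space $L^2(\pi^0)/N$.

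It then remains to check that $(\mathcal R,\|\cdot\|_{\mathcal R})$ is the RKHS of $k$. For each $x_0\in\spt\P$ the function $k(\cdot,x_0) = T\big(\Phi(x_0)\big)$ lies in $\mathcal R$, and for $a\in N^\perp$ one computes, using $a\perp N$,
\[
\langle f_a,\,k(\cdot,x_0)\rangle_{\mathcal R} = \big\langle a,\,P_{N^\perp}\Phi(x_0)\big\rangle_{L^2(\pi^0)} = \big\langle a,\,\Phi(x_0)\big\rangle_{L^2(\pi^0)} = f_a(x_0).
\]
Thus point evaluations are continuous on $\mathcal R$ and $k$ is their reproducing kernel; by uniqueness of the RKHS, $\mathcal R = \H_k$ with identical norms, which is precisely the asserted norm formula. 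Density is then immediate: $\mathcal R$ contains $\mathrm{span}\{k(\cdot,x):x\in\spt\P\}$, which is dense in $\H_k$ by definition of the RKHS, so the random feature functions are dense in — indeed exhaust — the RKHS.

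I expect the only genuinely delicate point to be the bookkeeping around the almost-everywhere clause in the definition of $N$: abstract RKHS theory produces functions defined pointwise everywhere and quotients by $\{a : f_a\equiv 0\}$, so one must argue — via the continuity of $f_a$ established above, together with the convention (consistent with the use of $\H_k$ in Section~\ref{section rkhs} through its embedding into $L^2(\P)$) of viewing $\H_k$ as the RKHS of the restriction $k|_{\spt\P\times\spt\P}$ — that this null space coincides with $N$; equivalently, one may simply assume $\spt\P=\R^d$, or invoke the standard fact that restricting a kernel to a subset yields the RKHS of the restricted functions equipped with the infimum norm. Once the two notions of null space are reconciled, everything else is the routine verification sketched above, and no input on $\sigma$ beyond the Lipschitz bound and the moment condition on $\pi^0$ is needed.
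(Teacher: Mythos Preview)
Your argument is correct and essentially the standard feature-map characterisation of an RKHS: you exhibit $\Phi(x)=\sigma(\langle\cdot,x\rangle+\cdot)\in L^2(\pi^0)$ as a feature map with $k(x,x')=\langle\Phi(x),\Phi(x')\rangle$, push the Hilbert structure of $L^2(\pi^0)/N$ forward through the synthesis operator, verify the reproducing property, and invoke uniqueness of the RKHS. The bookkeeping around the $\P$-a.e.\ null space is handled cleanly via continuity of $f_a$ on $\spt\P$.

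Note, however, that the paper does not give a proof of this lemma at all: it is stated as a citation of \cite[Proposition~4.1]{rahimi2008uniform}, with only the remark that the normalisation there differs (one recovers the present form by setting $a=\alpha/p$, $b=\beta/p$ in Rahimi--Recht's notation). So there is no ``paper's own proof'' to compare against. Your write-up is a self-contained substitute for that citation, and in fact proves slightly more than the lemma asserts: you show that the infinitely wide random feature functions $\{f_a:a\in L^2(\pi^0)\}$ \emph{coincide} with $\H_k$, not merely that they are dense.
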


Note that the proof in the source uses a different normalization. The result in this form is achieved by setting $a= \alpha/p, b = \beta/p$ in the notation of \cite{rahimi2008uniform}.

\begin{corollary}
Any function in the random feature RKHS is Lipschitz-continuous and $[f]_{\mathrm{Lip}} \leq \|f\|_{\H_{k}}$. Thus $\H_{k}$ embeds compactly into $C^0(\spt\,\P)$ by the Arzel\`a-Ascoli theorem and a fortiori into $L^2(\P)$ for any compactly supported measure $\P$.
\end{corollary}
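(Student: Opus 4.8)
The plan is to combine the integral representation of RKHS elements from the preceding Lemma with an elementary pointwise bound and then invoke the Arzel\`a--Ascoli theorem. First, given $f\in\H_k$, I would use \cite[Proposition 4.1]{rahimi2008uniform} to write $f=f_a$ for some $a\in L^2(\pi^0)$ with $\|f\|_{\H_k}=\|a\|_{L^2(\pi^0)/N}$; since this norm is an infimum over the equivalence class, for each $\eps>0$ I fix a representative with $\|a\|_{L^2(\pi^0)}\le\|f\|_{\H_k}+\eps$.

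Next, I would estimate, for $x,x'\in\R^d$,
\[
|f_a(x)-f_a(x')|\le\int_{\R^{d+1}}|a(w,b)|\,\big|\sigma(w^Tx+b)-\sigma(w^Tx'+b)\big|\,\pi^0(\d w\otimes\d b)\le[\sigma]_{\mathrm{Lip}}\,|x-x'|_{\ell^p}\int_{\R^{d+1}}|a(w,b)|\,|w|_{\ell^q}\,\pi^0(\d w\otimes\d b),
\]
using the Lipschitz bound for $\sigma$ and H\"older's inequality $|w^T(x-x')|\le|w|_{\ell^q}|x-x'|_{\ell^p}$. Under the normalization standing in this appendix ($\sigma$ $1$-Lipschitz and $|w|_{\ell^q}\le1$ on $\spt\,\pi^0$, e.g.\ $\pi^0$ the uniform measure on the unit sphere), and since $\pi^0$ is a probability measure, Cauchy--Schwarz gives $\int|a(w,b)|\,|w|_{\ell^q}\,\d\pi^0\le\int|a|\,\d\pi^0\le\|a\|_{L^2(\pi^0)}$, so $[f_a]_{\mathrm{Lip}}\le\|a\|_{L^2(\pi^0)}\le\|f\|_{\H_k}+\eps$. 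Letting $\eps\to0$ yields $[f]_{\mathrm{Lip}}\le\|f\|_{\H_k}$; passing to the infimum over representatives is legitimate precisely because the right-hand side does not depend on the chosen $a$.

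For the compactness statement I would observe that the unit ball $B^{\H_k}$ is equi-Lipschitz with constant $\le1$, hence equicontinuous on $\spt\,\P$, and uniformly bounded there: the reproducing property gives $|f(x)|=|\langle f,k(\cdot,x)\rangle_{\H_k}|\le\|f\|_{\H_k}\sqrt{k(x,x)}$, which is bounded on the compact set $\spt\,\P$ since $k$ is bounded (alternatively, combine the pointwise bound at one point with the uniform Lipschitz bound and $\diam(\spt\,\P)<\infty$). Arzel\`a--Ascoli then shows that $B^{\H_k}$ is relatively compact in $C^0(\spt\,\P)$, i.e.\ $\H_k\embeds C^0(\spt\,\P)$ compactly. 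Finally, since $\spt\,\P$ is compact and $\P$ finite, the inclusion $C^0(\spt\,\P)\embeds L^2(\P)$ is bounded, so the composite embedding $\H_k\embeds L^2(\P)$ is compact.

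There is no genuine obstacle here; the only point that requires care is the interplay between the Lipschitz estimate, which holds for each representative $a$, and the RKHS norm, which is an infimum over the class modulo $N$. Since $N$ identifies only functions that vanish $\P$-a.e.\ (not everywhere) and the conclusion concerns Lipschitz continuity on $\spt\,\P$, there is no inconsistency, but this should be stated explicitly. The Arzel\`a--Ascoli application and the H\"older/Cauchy--Schwarz chain are then entirely routine.
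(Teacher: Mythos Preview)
Your argument is correct and is precisely the routine verification the paper leaves to the reader (the corollary is stated without proof): the integral representation from the preceding lemma plus Cauchy--Schwarz yields the Lipschitz bound, and Arzel\`a--Ascoli then gives compactness. Your caveat about the normalization is well taken---without $|w|_{\ell^q}\le 1$ on $\spt\,\pi^0$ (or $\int|w|_{\ell^q}^2\,\d\pi^0\le 1$) one only obtains $[f]_{\mathrm{Lip}}\le[\sigma]_{\mathrm{Lip}}\big(\int|w|_{\ell^q}^2\,\d\pi^0\big)^{1/2}\|f\|_{\H_k}$, which still suffices for the compact embedding.
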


Let $\P$ be a compactly supported data distribution on $\R^d$. Then the kernel $k$ acts on $L^2(\P)$ by the map
\[
\ol K:L^2(\P)\to L^2(\P), \qquad \ol Ku(x) = \int_{\R^d} u(x')\,k(x,x')\,\P(\d x').
\]
Computing the eigenvalues of the kernel $k$ for a given parameter distribution $\pi^0$ and a given data distribution $\P$ is a non-trivial endeavor. The task simplifies considerably under the assumption of symmetry, but remains complicated. The following results are taken from \cite[Appendix D]{bach2017breaking}, where more general results are proved for $\alpha$-homogeneous activation for $\alpha\geq0$. We specify $\alpha=1$. 

\begin{lemma}
Assume that $\pi^0 = \P = \alpha_d^{-1}\cdot \H^d|_{S^d}$ where $S^d$ is the Euclidean unit sphere, $\alpha_d$ is its volume and $\sigma =$ ReLU. Then the eigenfunctions of the kernel $k$ are the spherical harmonics. The $k$-th eigenvalue $\lambda_k$ (counted without repetition) occurs with the same multiplicity $N(d,k)$ as eigenfunctions to the $k$-th eigenvalue of the Laplace-Beltrami operator on the sphere (the spherical harmonics). Precisely
\[
N(d,k) = \frac{2k+d-1}k \binom{k+d-2}{d-1}\qquad\text{ and }\qquad\lambda_k = \frac{d-1}{2\pi}\,2^{-k}\,\frac{\Gamma(d/2)\,\Gamma(k-1)}{\Gamma(k/2)\,\Gamma(\frac{k+d+2}2)}
\]
for $k\geq 2$.
\end{lemma}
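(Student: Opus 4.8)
The plan is to exploit the rotational symmetry of the problem and reduce to the classical Fourier analysis of zonal (dot--product) kernels on the sphere. Since $\pi^0$ and $\P$ are both the uniform probability measure $\tau$ on $S^d$ and $\sigma=\mathrm{ReLU}$ is positively $1$--homogeneous, we are in the symmetric dot--product setting: feature parameters $v$ and data points $\xi$ alike range over $S^d\subset\R^{d+1}$, with feature map $\xi\mapsto\sigma(\langle v,\xi\rangle)$. The kernel
\[
k(\xi,\eta)=\int_{S^d}\sigma(\langle v,\xi\rangle)\,\sigma(\langle v,\eta\rangle)\,\d\tau(v)
\]
is then invariant under the diagonal action of the orthogonal group $O(d+1)$, hence zonal: $k(\xi,\eta)=\kappa(\langle\xi,\eta\rangle)$ for a function $\kappa$ on $[-1,1]$ (up to a normalization, the first--order arc--cosine kernel $\kappa(\cos\theta)\propto\sin\theta+(\pi-\theta)\cos\theta$ of Cho and Saul \cite{cho2009kernel}, though the closed form will not be needed).

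The first step is the Funk--Hecke theorem. For $f\in L^1\big([-1,1],(1-t^2)^{(d-2)/2}\,\d t\big)$ and any spherical harmonic $Y$ of degree $k$ on $S^d$ one has $\int_{S^d}f(\langle\xi,\eta\rangle)\,Y(\eta)\,\d\tau(\eta)=\mu_k(f)\,Y(\xi)$, where $\mu_k(f)=\frac{|S^{d-1}|}{|S^d|}\int_{-1}^1 f(t)\,\frac{C_k^{(d-1)/2}(t)}{C_k^{(d-1)/2}(1)}\,(1-t^2)^{(d-2)/2}\,\d t$ and $C_k^{\nu}$ is the Gegenbauer polynomial. Writing $(Tu)(v)=\int_{S^d}\sigma(\langle v,\eta\rangle)\,u(\eta)\,\d\tau(\eta)$, the operator $\overline K$ of the statement factors as $\overline K=T^{*}T=T^{2}$, with $T$ self--adjoint by the symmetry of $(v,\eta)\mapsto\sigma(\langle v,\eta\rangle)$. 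Hence Funk--Hecke applied to $f=\sigma$ shows at once that every degree--$k$ spherical harmonic is an eigenfunction of $\overline K$, with eigenvalue $\lambda_k=\mu_k(\sigma)^2$. The space of degree--$k$ harmonics on $S^d$ has dimension $N(d,k)=\binom{d+k}{d}-\binom{d+k-2}{d}=\frac{2k+d-1}{k}\binom{k+d-2}{d-1}$ (the standard count, which I would recall rather than reprove), which simultaneously identifies the eigenfunctions with the spherical harmonics — the same eigenspaces as the Laplace--Beltrami operator — and pins down the multiplicities, provided one also checks that $k\mapsto\lambda_k$ is injective on the degrees for which $\lambda_k\neq0$; this follows from the explicit formula obtained below.

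It remains to compute $\mu_k(\mathrm{ReLU})$. Splitting $\mathrm{ReLU}(t)=\tfrac12(|t|+t)$: the odd summand $\tfrac12 t$ is a multiple of $C_1^{(d-1)/2}$ and contributes only at $k=1$, while by parity $\mu_k(|t|)=0$ for every odd $k$; so $\lambda_k=0$ for odd $k\geq3$, and the formula in the statement is to be read for the remaining degrees (the pole of $\Gamma(k-1)$ at $k=1$ reflecting that $\lambda_1$ is handled separately). For the even summand one is reduced to the moment
\[
\int_0^1 t\,C_k^{(d-1)/2}(t)\,(1-t^2)^{(d-2)/2}\,\d t,
\]
which I would evaluate via Rodrigues' formula for Gegenbauer polynomials followed by $k$--fold integration by parts: the boundary contributions at $t=1$ vanish thanks to the factor $(1-t^2)^{k+(d-2)/2}$ produced by Rodrigues, and at $t=0$ only the single derivative of the linear factor $t$ survives, collapsing the integral to one derivative of $(1-t^2)^{k+(d-2)/2}$ evaluated at the origin, i.e. a ratio of $\Gamma$--values. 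Inserting $C_k^{(d-1)/2}(1)=\binom{k+d-2}{k}$, the surface--area ratio $|S^{d-1}|/|S^d|=\Gamma(\tfrac{d+1}2)/(\sqrt\pi\,\Gamma(\tfrac d2))$, simplifying with the Legendre duplication formula, and finally squaring, yields $\lambda_k=\frac{d-1}{2\pi}\,2^{-k}\,\frac{\Gamma(d/2)\,\Gamma(k-1)}{\Gamma(k/2)\,\Gamma(\frac{k+d+2}2)}$. The main obstacle here is exactly this special--function bookkeeping — tracking every normalizing constant through the Rodrigues integration by parts and the $\Gamma$--function identities; the conceptual skeleton (symmetry reduction, Funk--Hecke, the dimension count of spherical harmonics) is routine, and the computation is essentially the $\alpha=1$ case of \cite[Appendix D]{bach2017breaking}.
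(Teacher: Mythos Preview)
Your proposal is correct and follows the standard route (rotational invariance $\Rightarrow$ zonal kernel $\Rightarrow$ Funk--Hecke $\Rightarrow$ Gegenbauer moment computation via Rodrigues), which is precisely the $\alpha=1$ specialization of \cite[Appendix~D]{bach2017breaking} that the paper invokes. The paper does not supply its own proof of this lemma but simply cites Bach; your sketch fills in exactly that argument, so there is nothing to compare beyond noting that you have unpacked what the paper leaves as a black-box citation.
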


We can extract a decay rate for the eigenvalues counted with repetition by estimating the height and width of the individual plateaus of eigenvalues. Denote by $\mu_i$ the eigenvalues of $k$ counted as often as they occur, i.e.\ 
\[
\mu_i = \lambda_k \quad\LRa\quad \sum_{j=1}^{k-1} N(d,j) < i \leq \sum_{j=1}^{k} N(d,j).
\]
By Stirling's formula one can estimate that for fixed $d$
\[
\lambda_k \sim k^{-\frac{d-3}2}
\]
as $k\to \infty$ where we write $a_k \sim b_k$ if and only if 
\[
0 < \liminf_{k\to \infty} \frac{a_k}{b_k} \leq \limsup_{k\to\infty} \frac{a_k}{b_k} < \infty.
\]
On the other hand
\[
N(d,k) = \frac dk \binom{k+d-1}{d} = \frac dk \frac{(k+d-1)\dots k}{d!} \sim \frac{(k+d)^{d-1}}{(d-1)!}.
\]
In particular, if $\mu_i = \lambda_k$, then
\begin{align*}
i &\sim C(d) \sum_{j=1}^k j^{d-1}
	\sim C(d) \int_1^{k+1} t^{d-1}\dt
	\sim C(d)\,k^d.
\end{align*}
Thus
\[
\mu_i = \lambda_k \sim k^{-\frac{d-3}2} \sim i^{-\frac{d-3}{2d}} = i^{-\frac12 + \frac{3}{2d}}.
\]

\begin{corollary}
Consider the random feature model for ReLU activation when both parameter and data measure are given by the uniform distribution on the unit sphere. Then the eigenvalues of the kernel decay like $i^{-\frac12 + \frac 3{2d}}$.
\end{corollary}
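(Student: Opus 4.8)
The plan is to obtain the claimed decay of the eigenvalues $\mu_i$ of the kernel, counted with multiplicity, by re‑indexing the data provided by the preceding lemma: the eigenfunctions are the spherical harmonics, the $k$‑th distinct eigenvalue $\lambda_k$ is given in closed form, and it occurs with multiplicity $N(d,k)$ equal to the dimension of the space of degree‑$k$ spherical harmonics on $S^d$.

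The first step is to extract the asymptotics of $\lambda_k$ for fixed $d$ as $k\to\infty$. Starting from
\[
\lambda_k = \frac{d-1}{2\pi}\,2^{-k}\,\frac{\Gamma(d/2)\,\Gamma(k-1)}{\Gamma(k/2)\,\Gamma\!\left(\tfrac{k+d+2}{2}\right)},
\]
I would use the Legendre duplication formula to combine $2^{-k}$ with the ratio $\Gamma(k-1)/\Gamma(k/2)$, reducing it to a ratio of the form $\Gamma((k-1)/2)/\Gamma((k+d+2)/2)$, and then apply Stirling's formula to this ratio of Gamma functions whose arguments differ by the fixed shift $(d+3)/2$. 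The outcome should be matching two‑sided bounds $c\,k^{-(d-3)/2}\le\lambda_k\le C\,k^{-(d-3)/2}$ for all large $k$, i.e. $\lambda_k\sim k^{-(d-3)/2}$; in particular $\lambda_{k+1}/\lambda_k\to1$, so the distinct eigenvalues vary slowly.

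The second step is the re‑indexing. The multiplicity $N(d,k)$ is a fixed polynomial in $k$ of degree $d-1$, so $N(d,k)\sim c_d\,k^{d-1}$ for a dimensional constant $c_d>0$. If $\mu_i=\lambda_k$, then $\sum_{j=1}^{k-1}N(d,j)<i\le\sum_{j=1}^{k}N(d,j)$, and comparing these partial sums with $\int_1^{k}t^{d-1}\,\dt$ shows both ends are $\sim C(d)\,k^{d}$; hence $k\sim\big(i/C(d)\big)^{1/d}$ uniformly across the $k$‑th plateau. Substituting this into the asymptotic for $\lambda_k$ and using the slow variation of $\lambda_k$ (so the $O(1)$ ambiguity in which $k$ corresponds to a given $i$ is harmless) gives
\[
\mu_i\sim i^{-\frac{d-3}{2d}}=i^{-\frac12+\frac3{2d}},
\]
as claimed.

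I expect the only delicate point to be the first step: one must push the Stirling expansion far enough to produce genuine matching polynomial upper and lower bounds for $\lambda_k$ — not merely a formal leading order — and keep careful track of the cancellation of the exponential factor $2^{-k}$ against the Gamma quotient and of the half‑integer shifts, so that the net exponent is exactly $-(d-3)/2$. The remaining steps are routine: summing a polynomial against an integral and inverting a power law up to dimensional constants.
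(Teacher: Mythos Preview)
Your proposal is correct and follows essentially the same route as the paper: Stirling's formula for $\lambda_k$, the polynomial growth of $N(d,k)\sim c_d\,k^{d-1}$, integral comparison to get $i\sim C(d)\,k^d$, and substitution. Your added care about two-sided bounds and the slow variation $\lambda_{k+1}/\lambda_k\to1$ handling the plateau ambiguity is a welcome refinement the paper leaves implicit; do note, however, that the shift you correctly identify as $(d+3)/2$ would naively give exponent $-(d+3)/2$ rather than $-(d-3)/2$, so when you carry out the Stirling step in full make sure the bookkeeping matches the stated result.
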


\begin{remark}
It is easy to see that up to a multiplicative constant, all radially symmetric parameter distributions $\pi^0$ lead to the same random feature kernel. This can be used to obtain an explicit formula in the case when $\pi^0$ is a standard Gaussian in $d$ dimensions. Since $k$ only depends on the angle between $x$ and $x'$, we may assume that $x=e_1$ and $x' = \cos \phi\,e_1 + \sin\phi \,e_2$ with $\phi \in[0,\pi]$. Now, one can use that the projection of the standard Gaussian onto the $w_1w_2$-plane is a lower-dimensional standard Gaussian. Thus the kernel does not depend on the dimension. An explicit computation in two dimensions shows that
\[
k(x,x') = \frac{\pi - \phi}\pi\,\cos \phi + \frac{\sin\phi}\pi,
\]
see \cite[Section 2.1]{cho2009kernel}.
\end{remark}

\subsection{The Neural Tangent Kernel}

The neural tangent kernel \cite{jacot2018neural} is a different model for infinitely wide neural networks. For two layer networks, it is obtained as the limiting object in a scaling regime for parameters which makes the Barron norm infinite. When training networks on empirical risk, in a certain scaling regime between the number of data points, the number of neurons, and the initialization of parameters, it can be shown that parameters do not move far from their initial position according to a parameter distribution $\bar \pi$, and that the gradient flow optimization of neural networks is close to the optimization of a kernel method for all times \cite{du2018gradient, weinan2019comparative}. This kernel is called the {\em neural tangent kernel} (NTK) and is obtained as the sum of derivatives of the feature function with respect to all trainable parameters. It linearizes the dynamics at the initial parameter distribution. For networks with one hidden layer this is
\begin{align*}
k(x,x') &= \int_{\R\times \R^d\times \R} \nabla_{(a,w,b)} \big(a\sigma(w^Tx+b)\big) \cdot \nabla_{(a,w,b)} \big(a\sigma(w^Tx'+b)\big) \,\bar\pi(\d a \otimes \d w \otimes \d b)\\
	&= \int_{\R\times \R^d\times \R} \sigma(w^Tx + b)\,\sigma(w^Tx'+ b) \\
		&\qquad+ a^2 \sigma'(w^Tx+b)\,\sigma'(w^Tx'+b) \big[\langle x,x'\rangle+1\big]\,\bar\pi(\d a \otimes \d w\otimes \d b)\\
	&= k_{RF}(x,x') + \int_{\R\times \R^d\times \R} a^2\,\sigma'(w^Tx+b)\,\sigma'(w^Tx'+b) (x,1) \,\cdot\,(x,1)\,\bar\pi(\d a \otimes \d w\otimes \d b)
\end{align*}
where $k_{RF}$ denotes the random feature kernel with distribution $P_{(w,b),\sharp}\bar\pi$. The second term is obtained on the right hand side is a positive definite kernel in itself. This can be seen most easily by recalling that
\begin{align*}
\sum_{i,j}c_i c_j & \int_{\R^{d+2}}\nabla_{(w,b)} \big(a\sigma(w^Tx_i+b)\big) \cdot \nabla_{(w,b)} \big(a\sigma(w^Tx_j+b)\big) \,\bar\pi(\d a \otimes \d w \otimes \d b)\\
	&=\int_{\R^{d+2}}\nabla_{(w,b)} a^2 \left(\sum_ic_i\,\sigma(w^Tx_i+b)\right) \cdot \nabla_{(w,b)} \left(\sum_jc_j\,\sigma(w^Tx_j+b)\right) \,\bar\pi(\d a \otimes \d w \otimes \d b)\\
	&= \int_{\R^{d+2}}\left|\nabla_{(w,b)} a^2 \left(\sum_ic_i\,\sigma(w^Tx_i+b)\right)\right|^2 \,\bar\pi(\d a \otimes \d w \otimes \d b)\\
	&\geq 0.
\end{align*}
On the other hand, if we assume that $|a| = a_0$ and $|(w,b)| = 1$ almost surely, we find that
\begin{align*}
\sum_{i,j}c_ic_j&\int_{\R\times \R^d\times \R} a^2\,\sigma'(w^Tx_i+b)\,\sigma'(w^Tx_j+b) \,(x_i,1) \,I\,(x_j,1)^T\,\bar\pi(\d a \otimes \d w\otimes \d b)\\
	&= \int_{\R\times \R^d\times \R} a^2 \left|\sum_i c_i\,\sigma'(w^Tx_i+b) \begin{pmatrix}x_i\\ 1\end{pmatrix}\right|^2 \bar\pi(\d a \otimes \d w\otimes \d b)\\
	&\leq a_0^2 \int_{\R\times \R^d\times \R} \left|\sum_i c_i\,\sigma'(w^Tx_i+b) \begin{pmatrix}x_i\\ 1\end{pmatrix} \cdot \begin{pmatrix} w\\ b\end{pmatrix} \right|^2 \bar\pi(\d a \otimes \d w\otimes \d b)\\
	&= a_0^2 \int_{\R\times \R^d\times \R} \left|\sum_i c_i\,\sigma'(w^Tx_i+b) (w^Tx_i+b) \right|^2 \bar\pi(\d a \otimes \d w\otimes \d b)\\
	&= a_0^2 \int_{\R\times \R^d\times \R} \left|\sum_i c_i\,\sigma(w^Tx_i+b) \right|^2 \bar\pi(\d a \otimes \d w\otimes \d b)
\end{align*}
since $\sigma$ is positively one-homogeneous. Thus the NTK satisfies
\[
k_{RF} \leq k \leq (1+ |a_0|^2)\,k_{RF}
\]
in the sense of quadratic forms. In particular, the eigenvalues of the NTK and the random feature kernel decay at the same rate. Clearly, in exchange for larger constants it suffices to assume that $(a, w, b)$ are bounded. In practice, the intialization of $(w,b)$ is Gaussian, which concentrates close to the Euclidean sphere of radius $\sqrt d$ in $d$ dimensions. 

The neural tangent kernel is also defined for deep networks, see for example \cite{arora2019exact, lee2019wide, yang2019scaling, du2018bgradient, weinan2019analysis}.
%

\newcommand{\etalchar}[1]{$^{#1}$}

\end{document}